% SIAM Article Template
\documentclass[hidelinks,onefignum,onetabnum]{siamart220329}

% Information that is shared between the article and the supplement
% (title and author information, macros, packages, etc.) goes into
% ex_shared.tex. If there is no supplement, this file can be included
% directly.

% SIAM Shared Information Template
% This is information that is shared between the main document and any
% supplement. If no supplement is required, then this information can
% be included directly in the main document.

% Packages and macros go here

\usepackage{lipsum}
\usepackage{amsfonts}
\usepackage{amssymb}
\usepackage{graphicx}
\usepackage{epstopdf}
\usepackage{algorithmic}
\usepackage{stmaryrd}
\ifpdf
  \DeclareGraphicsExtensions{.eps,.pdf,.png,.jpg}
\else
  \DeclareGraphicsExtensions{.eps}
\fi

% Figure directory
\graphicspath{{plots/}{./ic_plots/}}

% Add a serial/Oxford comma by default.

% Used for creating new theorem and remark environments
\newsiamremark{remark}{Remark}
\newsiamremark{hypothesis}{Hypothesis}
\crefname{hypothesis}{Hypothesis}{Hypotheses}
\newsiamthm{claim}{Claim}

% User-defined stuff
\newcommand{\myvec}[1]{\boldsymbol{\mathbf{#1}}}
\newcommand{\mymat}[1]{\mathbf{#1}}

\newcommand{\algoname}{\textrm{Invariant Consistent DMD}}
\newcommand{\algonick}{\textrm{IC-DMD}}
\newcommand{\state}{\myvec{\upsilon}}
\newcommand{\dictionary}{\myvec{\psi}}

\newcommand{\undelayed}[1]{\breve{#1}}
\newcommand{\undelayedictionary}{\undelayed{\dictionary}}

\newcommand{\ssdim}{p}
\newcommand{\dictionarylength}{m}
\newcommand{\undelayedictlength}{\undelayed{\dictionarylength}}
\newcommand{\trainlength}{n}
\newcommand{\geomconscount}{g}
\newcommand{\funconscount}{f}
\newcommand{\fundelayscount}{d}

\newcommand{\genericcount}{o}

\newcommand{\gcon}{\mymat{D}}
\newcommand{\fcon}{\mymat{E}}
\newcommand{\fconvec}{\myvec{e}}

\newcommand{\gconplus}{\gcon^{\oplus}}
\newcommand{\fconplus}{\fcon^{\oplus}}
\newcommand{\fconplusvec}{\myvec{e}^{\oplus}}

\newcommand{\gconequalizer}{\widehat{\mymat{V}}}

\newcommand{\rowselect}[1]{\left\llbracket #1 \right\rrbracket}

%----------------------------------------------------------
\newcommand{\opt}[1]{#1^*}

\newcommand{\optA}{\opt{\mymat{A}}}
\newcommand{\optC}{\opt{\mymat{C}}}

%----------------------------------------------------------
\newcommand{\reduced}[1]{\widetilde{#1}}

%----------------------------------------------------------
\newcommand{\reducedopt}[1]{\opt{\reduced{#1}}}

\newcommand{\reducedoptA}{\reducedopt{\mymat{A}}}
%----------------------------------------------------------

\newcommand{\dummy}[1]{\widehat{#1}}
%-----------------------------------------------------------

\newcommand{\DMD}{Dynamic Mode Decomposition~}

\newcommand{\naiveDMDFlavour}{Extended~}

\newcommand{\naiveDMDAcronym}{EDMD}
\newcommand{\naiveDMDMatrix}{\mymat{A}_{\rm \naiveDMDAcronym}}

%------------------------------------------------------------

\newcommand{\velocity}{\myvec{f}}
\newcommand{\flow}{\mathcal{S}}

%--------------------------------------------------------------
\usepackage{hyperref}
\usepackage{easyReview}
\usepackage[caption=false]{subfig}

% Sets running headers as well as PDF title and authors
% \headers{An Example Article}{D. Doe, P. T. Frank, and J. E. Smith}

% Title. If the supplement option is on, then "Supplementary Material"
% is automatically inserted before the title.
\title{Invariant Consistent Dynamic Mode Decomposition}

% Authors: full names plus addresses.
\author{Gowtham S Seenivasaharagavan\thanks{Department of Mechanical Engineering,
		University of California Santa Barbara} \and Milan Korda\thanks{Methods and Algorithms for Control,	LAAS-CNRS} \and Hassan Arbabi \thanks{Department of Mechanical Engineering,	Massachusetts Institute of Technology} \and Igor Mezi{\'c}\thanks{Department of Mechanical Engineering,
		University of California Santa Barbara}}

\usepackage{amsopn}

%%% Local Variables: 
%%% mode:latex
%%% TeX-master: "ex_article"
%%% End: 

%% Optional PDF information
%\ifpdf
%\hypersetup{
%  pdftitle={An Example Article},
%  pdfauthor={D. Doe, P. T. Frank, and J. E. Smith}
%}
%\fi

% The next statement enables references to information in the
% supplement. See the xr-hyperref package for details.

\externaldocument[][nocite]{ota2invsets_supplement}

% FundRef data to be entered by SIAM
%<funding-group specific-use="FundRef">
%<award-group>
%<funding-source>
%<named-content content-type="funder-name"> 
%</named-content> 
%<named-content content-type="funder-identifier"> 
%</named-content>
%</funding-source>
%<award-id> </award-id>
%</award-group>
%</funding-group>

\begin{document}

\maketitle

% REQUIRED
\begin{abstract}
	% "may" be instead of "can" be since we may not always have  non-trivial eigen-functions.
	Any deterministic autonomous dynamical system may be globally linearized by its' Koopman operator.
	This object is typically infinite-dimensional and can be approximated by the so-called Dynamic Mode Decomposition (DMD).
	In DMD, the central idea is to preserve a fundamental property of the Koopman operator: linearity.
	This work augments DMD by preserving additional properties like functional relationships between observables and consistency along geometric invariants.
	The first set of constraints provides a framework for understanding DMD variants like Higher-order DMD and Affine DMD.
	The latter set guarantees the estimation of Koopman eigen-functions with eigen-value 1, whose level sets are known to delineate invariant sets.
	These benefits are realized with only a minimal increase in computational cost, primarily due to the linearity of constraints.
\end{abstract}

% REQUIRED
\begin{keywords}
	Koopman operator, Dynamic Mode Decomposition, Invariant sets, Basin of attraction.
\end{keywords}

% REQUIRED
\begin{MSCcodes}
	37B35, 47N20, 65J10
\end{MSCcodes}

\section{Introduction}
Distilling a complex model into simpler and better interpretable formats is indispensable in science and engineering.
Among the numerous paths to achieve said model reduction, a particularly elegant approach is to leverage the associated Koopman operator \cite{koopman1931hamiltonian,mezic2005spectral}.
This object encodes any autonomous deterministic dynamical system as an equivalent evolution where the states (aka observables) are functions on the original state space.
Although the manipulation of observables often renders the Koopman operator to be infinite-dimensional, it is always linear.
Consequently, the operator can possess eigen-functions that, by definition, provide linear representations of the original dynamics.

%\comment{}{What is the problem ?} % Finite data.
The spectral objects associated to the Koopman operator can be approximated by the so-called Dynamic Mode Decomposition (DMD) \cite{rowley2009spectral,williams2015data}.
This algorithm produces a linear model (i.e., a matrix) that converges appropriately to the Koopman operator in the limit of infinite data \cite{korda2018convergence}.
In practice, data is often finite and the resulting DMD model may only be a \emph{quantitative approximation} of the underlying operator.
However, we might want \emph{qualitative exactness} in certain aspects. 
To paraphrase, we may wish the DMD model possess certain properties of the Koopman operator that are deemed important from a modeling perspective.

Some of these properties are induced by peculiarities of the underlying dynamical system.
For instance, measure-preserving dynamics makes the Koopman operator unitary \cite{koopman1931hamiltonian}.
Similarly, discrete symmetries induce a block-diagonal structure in the Koopman operator, when exhibited in an appropriate basis \cite{salova2019koopman}. % Symmetry-aware basis ??
In contrast, properties like positivity \cite{huang2018data,freeman2023data} and forward-backward consistency \cite{dawson2016__fb_TLS_DMD,azencot2019__consistent_DMD} are dynamics-independent i.e., are exhibited by the Koopman operator irrespective of the system that defines it. 
%TODO: Refined comments on Dimitri positivity.

%\comment{}{Why bother ?}
Regardless of their provenance, imbibing known properties of the Koopman operator permits the redirection of modeling resources towards its' under-resolved aspects.
The observed benefits of preserving such structure include greater computational efficiency in constructing the DMD model \cite{salova2019koopman} and enhanced estimation of Koopman spectral objects \cite{azencot2019__consistent_DMD,baddoo2021physics}. 
Some studies also report stable and accurate long-term forecasts \cite{baddoo2021physics,colbrook2023mpedmd}.
% TODO: Add line above by verifying that Kwon-Koopman-reactor actually does it.
%\comment{Although preserving properties of the Koopman operator is conceptually closest to imposing constraints on the DMD regression, the aforementioned benefits have also been observed when the desired properties are simply promoted with penalties instead of hard constraints.}{VERIFY! }

%\comment{}{What is unique here ?}
This work develops a variant of DMD dubbed \algoname~(\algonick) that guarantees consistency along known geometric and functional invariants.
\algonick~ uses linear constraints to enforce consistency along known fixed points, limit cycles and eigen-functions in the span of the observables chosen (\cref{ss:IC_DMD__Formulation}).
The simplicity of these requirements, in comparison to existing DMD variants, belies their conceptual and computational utility.

Conceptual insights include the (effective) recovery of Affine DMD \cite{hirsh2019centering} by consistently mapping the constant eigen-function (\cref{sss:Preserve_constant_EF__for__Affine_DMD}) and a re-formulation of Higher-order DMD \cite{le2017higher} by encoding the relationships between time-delayed observables (\cref{sss:Encode_delays__for__HO_DMD}).
% The non-unique case is the thorn. But, we go ahead and show its property is recovered, so that should count for something.
Computational gains are centered around the estimation of Koopman spectral objects and forecasting.
Consistency along a fixed point or a limit cycle induces, by duality, a DMD eigen-function at 1 that, in theory, can be computed without solving an eigenvalue problem (\cref{ss:Induced_EFs}).

\section{Notation}\label{s:Notation}
We adopt a conventional notation for our primary objects of interest: vectors and matrices. 
The former are typically in \(\mathbb{C}^m\), labeled with bold-faced lower-case letters and represented as columns:
\begin{displaymath}
	\myvec{v} ~=~ [v_i ]_{i=1}^{m} ~=~ \begin{bmatrix}
		v_1 \\ v_2 \\ \vdots\\ v_m
	\end{bmatrix}.
\end{displaymath}
Constant vectors (those with only one unique coefficient) are represented by their value in boldface and dimension as a subscript.
\begin{displaymath}
	\mathbf{1}_m = [1]_{i=1}^{m}.
\end{displaymath}

Matrices are usually in \( \mathbb{C}^{m \times n}\), denoted by bold-faced upper-case letters and written as ordered sets of columns.
\begin{displaymath}
	\mymat{A} = [\myvec{a}_j ]_{j=1}^{n}  = \begin{bmatrix}
		\vline & \vline & &\vline\\
		\myvec{a}_1 & \myvec{a}_2 & \cdots & \myvec{a}_n \\
		\vline & \vline & &\vline
	\end{bmatrix} 
	= [a_{ij}]_{i,j=1}^{m,n}.
\end{displaymath}
Sub-scripts are used with bold-faced upper-case letters to describe the partitioning of a matrix, along columns or rows.
\begin{displaymath}
	\mymat{A}~=~
	\begin{bmatrix}
		\mymat{A}_1 & \mymat{A}_2 & \dots
	\end{bmatrix}
	~=~
	\begin{bmatrix}
		\rowselect{\mymat{A}}_1 \\
		\rowselect{\mymat{A}}_2 \\
		\vdots
	\end{bmatrix}.
\end{displaymath}
The (Hermitian) transpose of \(\mymat{A}\) is written as (\(\mymat{A}^{H}\)) \(\mymat{A}^T\).
The letter \(\mymat{I}\) is reserved for the identity matrix.
\begin{displaymath}
	\mymat{I}_{n} ~=~ [\myvec{e}_j]_{j=1}^{n} ~=:~ [\delta_{ij}]_{i,j=1}^n.
\end{displaymath}
Constant matrices are written similar to their vector counterparts:
\begin{displaymath}
	\mathbf{0}_{m \times n} = [\mathbf{0}_m]_{j=1}^{n} = [0]_{i,j=1}^{m,n}.
\end{displaymath}

Sub-spaces are written in script. For instance, \(\mathcal{R}(\mymat{Z})\) and \(\mathcal{N}(\mymat{Z})\) denote the column-space and null-space of \(\mymat{Z}\) respectively.
Orthogonal projectors play a crucial role in this work. 
Under the standard inner product on \(\mathbb{C}^m\), \(\mathcal{P}_{\mathcal{W}}\) denotes the orthogonal projector onto the subspace \(\mathcal{W}\).
If we let \(\mymat{A}^\dagger\) denote the Moore-Penrose pseudo-inverse of \(\mymat{A}\), we get:
\begin{displaymath}
	\mathcal{P}_{\mathcal{R}(\mymat{A})} ~=~\mymat{A} \mymat{A}^\dagger, \quad \mathcal{P}_{\mathcal{N}(\mymat{A}^H)} ~=~ \mymat{I} - \mymat{A} \mymat{A}^\dagger.
\end{displaymath}

\section{Background}

\subsection{Koopman operator}
%%% Koopman operator --> Eigen-functions --> Remark{EFs @ 1, Invariant sets}
%%% REVP ---> Relaxed REVP ---> Koopman---> Eigen-functions
% 
%(Conceptually. Massive. Saving grace. LinOp)
%(Relaxed REVP. REVP.)
%TODO: \Gamma --> \DiscreteTimeFlow
Consider the autonomous discrete-time dynamical system defined by the update rule:
\begin{equation}\label{eq:DS4SDMD}
	\state^{+} = \Gamma (\state) , \; \state \in \mathbb{R}^\ssdim.
\end{equation}
Suppose we wish to find a coordinate transform that simplifies this dynamics, as formalized below:
\begin{definition}[Representation Eigen-Problem (REP)\cite{mezic2021koopman}]\label{defn:RepEig}
Find two functions \(f\) and \(\reduced{\Gamma}\) such that
	\begin{enumerate}
		\item The reduced dynamics is described by \(\reduced{\Gamma}\):
		\begin{displaymath}
			\forall~\state,\quad  f(\Gamma(\state)) = \reduced{\Gamma}(f(\state)).
		\end{displaymath}
		\item Evaluating \(\reduced{\Gamma} \circ f\) is computationally inexpensive compared to  \(f \circ \Gamma\).
	\end{enumerate}
\end{definition} 
This problem is not easy to solve, in part due to the innocuous requirement that \(\reduced{\Gamma}\) be a \textit{function}.
It restricts our choice of \(f\) to those functions whose every level-set is mapped by the dynamics only into another of its' level sets.
So, we can consider relaxing this restriction to permit \(\reduced{\Gamma}\) being an operator.
\begin{definition}[Relaxed Representation Eigen-Problem (Relaxed REP)]\label{defn:Relaxed_RepEig}
	Find a function \(f\) and an operator \(\reduced{\Gamma} \) that meet all the requirements listed in \cref{defn:RepEig}
\end{definition}
Conceptually, we are permitting \(f(\Gamma(\state))\), the future value of \(f\) at \(\state\), to be forecast using the entire function, \(f\), instead of just its' value at \(\state\), \(f(\state)\):
\begin{displaymath}
	\forall~\state,\quad f(\Gamma(\state)) = (\reduced{\Gamma} \circ f)(\state).
\end{displaymath}
Such a massive increase in model capacity usually requires a proportionate hike in computational resources.
Although the resolution of \cref{defn:Relaxed_RepEig} against this backdrop ( find \(f\) such that \(\reduced{\Gamma} \circ f \) is inexpensive, despite the exorbitant cost associated with \(\reduced{\Gamma}\)) may seem bleak, an elegant solution exists nonetheless and begins with just a closer look.

First, we need to recognize that the set of all feasible operators for \cref{defn:Relaxed_RepEig} contains only a unique element - the Koopman operator.
\begin{definition}[Koopman operator \cite{koopman1931hamiltonian}]\label{defn:Koopman_operator}
	Let \(\psi\) denote any complex-valued function on \(\mathbb{R}^\ssdim\).
	Then, the Koopman operator \(U\) associated to \cref{eq:DS4SDMD} is defined as follows:
	\begin{equation}
		U \circ \psi := \psi \circ \Gamma.
	\end{equation}
\end{definition}
Hence, the Koopman operator meets the first condition of Relaxed REP by construction.

Now, even if \cref{eq:DS4SDMD} describes a non-linear phenomenon in a finite-dimensional vector space, the dynamics it induces through \(U\), on a (typically) infinite-dimensional vector space of functions, is \textit{linear}.
More precisely, the Koopman operator, \(U\), is linear and this impels us to search for its' eigen-functions and eigen-values.
\begin{definition}[Koopman eigen-functions]
	The function \(\phi\) is said to be an eigen-function of the Koopman operator \(U\) with eigen-value \(\lambda\) if and only if the following holds:
	\begin{equation}
		U \circ \phi  = \lambda ~\phi.
	\end{equation}
\end{definition}
To paraphrase, the Koopman operator \(U\) acts on the eigen-function \(\phi\) by simply scaling it with its' associated eigen-value \(\lambda\).
The triviality of this update renders \(U \circ \phi \) vastly more economical than \(\phi \circ \Gamma \). 
Therefore, the pair \((U, \phi)\) is a solution to the Relaxed Representation Eigen-Problem (\ref{defn:Relaxed_RepEig}).

Furthermore, by invoking  \cref{defn:Koopman_operator}, we can also establish that the eigen-function \(\phi\) along with the operation of scaling by \(\lambda\) resolves the Representation Eigen-Problem (\ref{defn:RepEig}) as well.
\begin{displaymath}
	\begin{aligned}
		f~&=~\phi, \quad \quad \quad \forall z \in \mathbb{C},\quad \reduced{\Gamma}(z) = \lambda~z \\
		\implies f(\Gamma(\state))  ~=~ (\phi \circ \Gamma)(\state) ~&=~ (U \circ \phi)(\state) ~=~ \lambda~\phi(\state) ~=~ \reduced{\Gamma}(\phi(\state))
		~=~ \reduced{\Gamma}(f(\state)).
	\end{aligned}
\end{displaymath}

Finally, the linearity of \(U\) causes any function in the span of Koopman eigen-functions to mildly inherit the simplicity of its' constituents. 
The Koopman operator updates such a function by simply scaling the eigen-functions present in its' expansion.
\begin{definition}[Koopman Mode Decomposition \cite{mezic2005spectral}]\label{defn:Koopman_Mode_Decomposition}
		Suppose the function \(f\) lies in the span of the Koopman eigen-functions \(\{\phi_i\}_{i \in \mathcal{I}}\) with eigen-values \(\{\lambda_i\}_{i \in \mathcal{I}}\).
		Then, there exist numbers \(\{c_i\}_{i\in \mathcal{I}}\) such that
		\begin{displaymath}
			f ~=~ \sum_{i \in \mathcal{I}} c_i~\phi_i.
		\end{displaymath}
		The quantities \(\{c_i\}_{i\in \mathcal{I}}\) are termed the Koopman modes of \(f\) associated with eigen-values \(\{\lambda_i\}_{i \in \mathcal{I}}\).
		In addition, for any whole number \(n\), the action of the Koopman operator \(U\) is given by a super-position of its' actions on the constituent eigen-functions.
		\begin{displaymath}
			U^n \circ f ~=~ U^n \circ \left( \sum_{i \in \mathcal{I}} c_i~\phi_i \right)
			~=~ \sum_{i \in \mathcal{I}} c_i~ \left( U^n \circ  \phi_i \right)
			~=~ \sum_{i \in \mathcal{I}} c_i~ \lambda_i^n \phi_i .
		\end{displaymath} 
\end{definition}

Apart from the functional payoffs seen so far, Koopman eigen-functions also connect to geometric aspects of the underlying dynamical system, like isochrons and invariant sets.
Since the latter provide a testbed for illustrating \algoname, we briefly outline the essentials before describing the numerical algorithm (\naiveDMDAcronym) that serves as the foundation for \algonick.

\subsubsection{Invariant sets and the Koopman connect}
An invariant set is any subset of the state-space that is mapped into itself by the dynamics, in both forward and backward time.
\begin{definition}[Invariant set]\label{defn:Invariant set}
	A set \(I \subseteq \mathbb{R}^\ssdim\) is an invariant set of the dynamical system \cref{eq:DS4SDMD} if and only if the image and pre-image under \(\Gamma\) of every point in \(I\) remains inside \(I\).
	\begin{displaymath}
		I~\textrm{is invariant}~~~\iff~~~ \forall~ \state \in I,~~\Gamma[\state] \cup \Gamma^{-1}[\state] ~\subseteq~ I.
	\end{displaymath}
\end{definition}
Put differently, an invariant set is a dynamically self-sufficient entity.
The dynamics within an invariant set \(I\) can be understood using just the restriction of the update rule to itself  \((\Gamma|_I)\), disregarding the evolution occurring elsewhere \((\Gamma|_{I^c})\).
Hence, the notion of invariance transmutes set-theoretic disjointedness into dynamical decoupling.
If two disjoint sets \(A\) and \(B\) are also invariant, then the dynamics within each set \((A ~\textrm{or}~ B)\) can be analyzed oblivious of the happenings in the other \((B ~\textrm{or}~A)\).
As such, when the invariant sets in question are bounded and the update rule permits economical surrogates, set invariance can translate to significant computational efficiencies.

The dynamical autarkies discussed above are connected to the Koopman spectrum at 1 \cite{mezic1994geometrical}:
\begin{theorem}\label{thm:Invariance_via_Koopman}
	\(I\) is an invariant set of \cref{eq:DS4SDMD} if and only if \(I\) is a level set of some Koopman eigen-function with eigen-value 1.
\end{theorem}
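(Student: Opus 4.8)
The plan is to unfold both sides of the equivalence into pointwise statements about trajectories, and then, for the harder direction, to exhibit an explicit eigen-function built directly from the set. First I would rewrite the spectral condition using \cref{defn:Koopman_operator}: a function \(\phi\) is a Koopman eigen-function with eigen-value \(1\) exactly when \(U \circ \phi = \phi\), i.e. \(\phi \circ \Gamma = \phi\), or pointwise \(\phi(\Gamma(\state)) = \phi(\state)\) for every \(\state \in \mathbb{R}^\ssdim\). Such a \(\phi\) is therefore constant along every forward and backward trajectory, and its level sets carve \(\mathbb{R}^\ssdim\) into orbit-closed pieces. This reformulation is the bridge between the analytic hypothesis and the set-theoretic \cref{defn:Invariant set}.

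For the direction \emph{level set \(\Rightarrow\) invariant}, suppose \(I = \{\state : \phi(\state) = c\}\) for some eigen-function \(\phi\) with eigen-value \(1\). Taking \(\state \in I\), the identity \(\phi(\Gamma(\state)) = \phi(\state) = c\) places \(\Gamma(\state) \in I\), giving forward invariance. For the pre-image, if \(\Gamma(w) = \state\) then \(\phi(w) = \phi(\Gamma(w)) = \phi(\state) = c\), so \(w \in I\), giving backward invariance. Together these yield \(\Gamma[\state] \cup \Gamma^{-1}[\state] \subseteq I\), which is precisely the invariance requirement.

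The reverse direction \emph{invariant \(\Rightarrow\) level set} is where the real work lies, since I must manufacture an eigen-function from a purely set-theoretic hypothesis. The natural candidate is the indicator \(\chi_I\), equal to \(1\) on \(I\) and \(0\) elsewhere; then \(I = \{\state : \chi_I(\state) = 1\}\) is a level set by construction, and it remains only to verify \(\chi_I(\Gamma(\state)) = \chi_I(\state)\) for all \(\state\). When \(\state \in I\), forward invariance gives \(\Gamma(\state) \in I\), so both sides equal \(1\). The delicate case is \(\state \notin I\), where I must show \(\Gamma(\state) \notin I\): arguing by contradiction, if \(\Gamma(\state) \in I\) then backward invariance applied to the point \(\Gamma(\state)\) forces its pre-image, which contains \(\state\), to lie in \(I\), contradicting \(\state \notin I\).

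I expect the only genuine obstacle to be this construction, and specifically the recognition that both halves of the invariance definition are indispensable: forward invariance keeps \(\chi_I\) pinned at \(1\) on \(I\), while backward invariance keeps it pinned at \(0\) off \(I\), and either alone leaves \(\chi_I\) failing the eigen-function identity. Edge cases such as \(I = \mathbb{R}^\ssdim\) (where \(\chi_I\) reduces to the constant eigen-function) or \(I = \emptyset\) (a level set of any eigen-value-\(1\) function at a value outside its range) fall out trivially and need no separate argument.
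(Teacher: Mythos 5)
Your proof is correct. Note that the paper itself offers no proof of this statement --- it is quoted with a citation to the reference \cite{mezic1994geometrical} --- so there is no in-text argument to compare against; your self-contained proof supplies exactly what the standard argument is. Both directions are sound: the easy direction uses only the pointwise identity \(\phi\circ\Gamma=\phi\) from \cref{defn:Koopman_operator}, and the converse correctly identifies the indicator \(\chi_I\) as the witness, with the contradiction step for \(\state\notin I\) being precisely where the pre-image half of the paper's invariance definition is consumed. Two minor points worth making explicit if this were to be written out in full: first, an eigen-function must be nonzero, so the case \(I=\emptyset\) genuinely cannot be handled by \(\chi_I\) and does require the separate remark you make (the empty set is the level set of the constant eigen-function at any value other than \(1\)); second, your construction implicitly relies on the paper's Koopman operator being defined on \emph{all} complex-valued functions rather than on a restricted function class such as continuous or square-integrable observables --- in the latter settings \(\chi_I\) need not be an admissible eigen-function, which is the main reason this equivalence is usually stated, as in the cited reference, in a measure-theoretic framework.
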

Thus, we can estimate invariant sets by equivalently approximating Koopman eigen-functions whose corresponding eigen-value is 1.
%
% -------> Forced to talk about convergence ?
% 		\item The function \(f\) is non-trivial. For example, \(f\) cannot take a constant value everywhere as that provides no dynamical insight.

\subsection{Extended Dynamic Mode Decomposition (\naiveDMDAcronym)}
\naiveDMDFlavour\DMD is an algorithm used to estimate Koopman spectral objects \cite{williams2015data}.
\naiveDMDAcronym~ approximates the action of the Koopman operator, \(U\), on a user-specified set of functions (termed the dictionary).
The approximation is realized as a matrix (\(\naiveDMDMatrix\)), and its' eigen-decomposition yields the desired spectral information.

Suppose we observe the system through \(\dictionarylength\) functions \(\dictionary~:=~ [\psi_i]_{i=1}^\dictionarylength \).
Let \(\dictionary\) and its image under the Koopman operator, \(U \circ \dictionary\), be sampled at \(\trainlength\) points \( \{\state_j\}_{j=1}^\trainlength \) to generate the data matrices \(\mymat{X}\) and \(\mymat{Y}\).
\begin{equation}\label{eq:Data_4_Exact_Extended_DMD}
	\mymat{X} := [\dictionary(\state_j)]_{j=1}^\trainlength,\quad  
	\mymat{Y} := [(U \circ \dictionary)(\state_j)]_{j=1}^\trainlength = [\dictionary (\Gamma(\state_j))]_{j=1}^\trainlength .
\end{equation}
Then, an approximation to the action of \(U\) on \(\dictionary\) is given by any solution of the following least-squares regression:
\begin{equation}\label{eq:DMD_ObjectiveFunction}
	\naiveDMDMatrix \in \arg\min_{\mymat{A}}~ \lVert \mymat{A} \mymat{X} - \mymat{Y} \rVert_{F}^2 .
\end{equation}

The eigen-values and eigen-vectors of \(\naiveDMDMatrix\) approximate Koopman spectral quantities.
If \((\myvec{w}, \lambda)\) is a left eigen-pair of \(\naiveDMDMatrix\) i.e.
\begin{displaymath}
	\myvec{w}^H \naiveDMDMatrix~=~ \lambda \myvec{w}^H,
\end{displaymath}
then, the function \(\myvec{w}^H\dictionary\) approximates a Koopman eigen-function with eigenvalue \(\lambda\).
Similarly, if \((\myvec{v}, \lambda)\) is a right eigen-pair of \(\naiveDMDMatrix\) i.e.
\begin{displaymath}
	\naiveDMDMatrix \myvec{v} ~=~ \lambda \myvec{v},
\end{displaymath}
then, \(\myvec{v}\) approximates the Koopman mode corresponding to eigenvalue \(\lambda\), of the dictionary \(\dictionary\).

\section{Novelty statement}\label{s:Novelty_Statement}
\subsection{Some shortcomings of \naiveDMDAcronym}
%% True EF needn't been an EDMD EF ; But this not hard to meet if we meet some rank condition;
Suppose \(\myvec{w}^H \dictionary\) is a Koopman eigen-function with eigen-value \(\lambda\).
\begin{equation}\label{eq:Assertion_KoopamEigenFunction}
	U \circ (\myvec{w}^H \dictionary) ~=~ \lambda~ \myvec{w}^H \dictionary
\end{equation}
Then, it is reasonable to expect that \cref{eq:Assertion_KoopamEigenFunction} is reflected by \(\naiveDMDMatrix\) as
\begin{equation}\label{eq:Expectation__EDMD_EigenFunction}
	\myvec{w}^H \naiveDMDMatrix ~=~ \lambda~ \myvec{w}^H.
\end{equation} 
Although this property is not guaranteed by the definition of \naiveDMDAcronym~ in \cref{eq:DMD_ObjectiveFunction}, it holds whenever \(\mymat{X}\) has full row rank.
This verifiable criterion necessitates linearly-independent observables\footnote{\(\myvec{c}^H \dictionary = 0 \iff \myvec{c} = 0.\)} and at-least as many samples as observables \((\trainlength \geq \dictionarylength)\).
The former serves as a rationale for projecting \(\mymat{X}\) and \(\mymat{Y}\) onto the span of \(\mymat{X}\), before using them as inputs for \cref{eq:DMD_ObjectiveFunction} \cite{schmid2010dynamic}.
%TODO: Verify claim
The latter is a de-facto standard when performing \naiveDMDAcronym~ on low-dimensional systems, and in associated convergence guarantees.
While the strategies above can remedy the violation of \cref{eq:Expectation__EDMD_EigenFunction}, they are nonetheless extrinsic to the \naiveDMDAcronym~ formulation \cref{eq:DMD_ObjectiveFunction}.
This can limit their efficacy in the dual setting that follows.

Say \(\opt{\state}\) is a fixed point of the dynamical system \cref{eq:DS4SDMD}.
Then, we have
\begin{displaymath}
	(U \circ \dictionary)[\opt{\state}] ~=~ \dictionary[\opt{\state}].
\end{displaymath}
So, it seems natural to ask that \(\naiveDMDMatrix\) imbibe this property as well i.e. we want
\begin{equation}\label{eq:Expectation__EDMD_FixedPoint}
	\naiveDMDMatrix~ \dictionary[\opt{\state}] ~=~ \dictionary[\opt{\state}].
\end{equation}
Alas, this requirement can remain unmet even if observables are linearly independent and \(\trainlength >> \dictionarylength\).
To see this, suppose our observations of \cref{eq:DS4SDMD} have limited resolution.
Then, it may be that there exists a point \(\dummy{\state}\) such that 
\begin{displaymath}
	\dictionary[\opt{\state}] ~=~ \dictionary[\dummy{\state}] ~\neq~ \dictionary[\Gamma[\dummy{\state}]].
\end{displaymath}
If both \(\opt{\state}\) and \(\dummy{\state}\) are incorporated within the data matrix \(\mymat{X}\), then there is no linear map that can transform \(\mymat{X}\) into \(\mymat{Y}\).
However, the optimal way to attempt this impossible task defines \(\naiveDMDMatrix\) (see \cref{eq:DMD_ObjectiveFunction}).
Hence, the \naiveDMDAcronym~ model can fail to consistently map the fixed point \(\opt{\state}\) i.e. the condition \cref{eq:Expectation__EDMD_FixedPoint} can be violated. 
%TODO: Would be nice if we can actually strongly assert that the consistency will fail given our adversarial point assumption
% If \dictionary is not one-one, EDMD can fail to meet the fixed point requirement.
%%True fixed point needn't be an EDMD fixed point; This is dictionary independent; May be harder to meet with good sampling too;

\subsection{Resolution by \algonick}
\algoname~ intrinsically satisfies the modeling requirements \cref{eq:Expectation__EDMD_EigenFunction,eq:Expectation__EDMD_FixedPoint} by appending them as constraints to the \naiveDMDAcronym~ objective in \cref{eq:DMD_ObjectiveFunction}.
\begin{enumerate}
	\item The resulting formulation provides insights into two established variants of \DMD: Affine DMD \cite{hirsh2019centering} and Higher-order DMD \cite{le2017higher}.
	\item Linearity of constraints endows \algonick~ with an explicit solution whose computational cost remains comparable to \naiveDMDAcronym.
	\item Finally, consistent mapping of fixed points permits the construction of \algonick~ eigen-functions corresponding to eigen-value 1, \textit{without solving the associated eigen-value problem}.
\end{enumerate}

\section{Literature survey}
%% Structure-pres, invariant set.
\algoname~ incorporates knowledge of functional and geometric invariants into \naiveDMDAcronym, thereby producing estimates of invariant sets.
Each of these salient aspects is surveyed below.

\subsection{Structure preserving improvements of DMD}

%\comment{}{Lacks spirit; Could use connect to preceding, and some mention of convergence, if we're gonna be doing this whole "expansion of Introduction" theme.}

Dynamics-independent properties of the Koopman operator were the first to be incorporated into DMD models.
Forward-backward DMD \cite{dawson2016__fb_TLS_DMD} and it's upgrade Consistent DMD \cite{azencot2019__consistent_DMD} use the relationship between the spectra of \(U\) and \(U^{-1}\) to simultaneously reduce the bias and variance of DMD eigenvalues.
Naturally-structured DMD \cite{huang2018data} preserves the positivity of \(U\) and the Markov property of \(U^*\), thus guaranteeing a DMD eigen-function at 1.
% Dimitri positivity

Even so, assimilating dynamics-induced aspects of \(U\) into DMD has emerged an active area of research.
Salova and co-workers \cite{salova2019koopman} use discrete symmetries to implant Koopman invariant subspaces into reasonably generic dictionaries. This procedure block-diagonalizes the associated DMD matrix and, thereby, reduces the computational cost of its' construction.
PI-DMD \cite{baddoo2021physics} studies the effect of incorporating a plethora of physics into DMD, and exhibits superior Koopman spectral estimation relative to vanilla DMD in all associated case-studies.
mpEDMD \cite{colbrook2023mpedmd} rigorously extends PI-DMD to measure-preserving dynamics and reports greater accuracy in long-term forecasts.

The theoretical insights from \algonick~ (\cref{ss:DMD_Connections}) find their roots in dynamics-independent properties of \(U\), while the computational gains (\cref{ss:Induced_EFs}) largely arise from dynamics-induced aspects.

% Corbi sparsity
% Jacobian-regularized DMD

% KEF @ 1
% 

%% In passing, mention that a body of literature is focussing on promoting stability in the DMD model

\subsection{Estimation of invariant sets}
The common thread connecting techniques to estimate invariant sets is the approximation of a function that is known to possess (some) invariant pre-images.
For example, Lyapunov-based approaches construct an energy-like function, that is non-increasing along every trajectory, and return its' domain as the estimate \cite{giesl2015review}.
Similarly, the Koopman operator perspective embodied by \cref{thm:Invariance_via_Koopman} suggests finding eigen-functions with eigen-value 1, and then perusing their level sets.

% What does the Koopman community do ?
Initial work leveraging the Koopman view-point focused on measure-preserving dynamics and was based on time-averaging \cite{mezic1999method}.
The variational alternative that is \naiveDMDAcronym~can be much more sample-efficient, provided the dictionary is Koopman-invariant.
Unfortunately, this criterion is rarely easy to meet and its' violation can preclude \(\naiveDMDMatrix \) from possessing an eigen-function with eigen-value 1 .
%TODO: Citations for choosing eigen-function closest to 1.
Nonetheless, we gain reasonable insights from the eigen-function whose eigen-value is closest to 1.
This stopgap remedy to extracting invariant sets from \(\naiveDMDMatrix \), without relying on the condition of Koopman invariance, has been improved upon in many ways.
We discuss a few in the order of increasing computational cost.

Garcia-Tenorio and co-workers \cite{garcia2022evaluation} construct a unitary eigen-function from the \naiveDMDAcronym~ approximant, using the algebraic closure of Koopman eigen-functions under products and non-negative powers.
This post-processing has a negligible cost, so building \(\naiveDMDMatrix \) remains the computationally intensive step.
Huang and Vaidya \cite{huang2018data} inoculate \(\naiveDMDMatrix\) with the positivity of the Koopman operator along with the Markov property of its' adjoint, the Perron-Frobenius operator.
The resulting model, dubbed Naturally Structured DMD, is guaranteed to have an eigen-value at 1.
Computing the associated matrix representation requires solving a second-order cone program, which is a palatable jump from the quadratic solve required of \naiveDMDAcronym.
Otto and Rowley \cite{otto2019linearly} explicitly require their DMD model of the Duffing oscillator to contain the number 1 in its spectrum. 
Consequently, they obtain a well-resolved unitary eigen-function of the Duffing oscillator.
But, building this model necessitates solving a highly nonlinear optimization problem, making it much more expensive than the two preceding developments.

% What is new here
\algonick~ guarantees the estimation of Koopman eigen-functions with eigen-value 1, and hence invariant sets, at a cost comparable to \naiveDMDAcronym.

\begin{remark}
	 A complementary approach to estimating invariant sets involves the Perron-Frobenius operator, where the cynosure is once again an eigen-function with eigen-value 1 \cite{dellnitz2002set}.
	 The estimation here is philosophically closer to the time-averaging method \cite{mezic1999method} in that the eigen-function is estimated \textit{directly}, in a recursive manner, without resorting to any intermediaries like the \(\naiveDMDMatrix\) matrix.
\end{remark}

\section{\algoname}
\subsection{Formulation}\label{ss:IC_DMD__Formulation}
Suppose we have some states and functions
\begin{displaymath}
	\{\state^{\rm geom}_i\}_{i=1}^\geomconscount,\quad  \{ \fconvec_j^H\dictionary \}_{j=1}^{\funconscount}
\end{displaymath}
 that should be exactly mapped to their image under the dynamics as viewed via \(\dictionary\):
 \begin{displaymath}
 	\{\Gamma[\state^{\rm geom}_i]\}_{i=1}^\geomconscount, \quad  \{ U \circ \left( \fconvec_j^H\dictionary \right)\}_{j=1}^{\funconscount}.
 \end{displaymath}
Fixed-points are examples of the former, and the latter can include Koopman eigen-functions present in the span of \(\dictionary\).

This additional requirement can be precisely quantified when the functions in purview are mapped by \(U\) into the span of \(\dictionary\), i.e.,
\begin{displaymath}
	\forall~ j = 1:\funconscount,\quad \exists~ \fconplusvec_j ~\textrm{such that}~   U \circ \left( \fconvec_j^H\dictionary \right) ~=~ (\fconplusvec_j)^H\dictionary,
\end{displaymath}
as we can then construct the following matrices:
\begin{equation}
	\gcon ~:=~ \left[~~\dictionary[~\state^{\rm geom}_i~]~~\right]_{i=1}^\geomconscount ,\quad 
	\gconplus~:=~ \left[~~\dictionary[~\Gamma[\state^{\rm geom}_i]~]~~\right]_{i=1}^\geomconscount.
\end{equation}
\begin{equation}
		\fcon ~:=~ \left[\fconvec_j\right]_{j=1}^{\funconscount}  , \quad
		\fconplus ~:=~ \left[\fconplusvec_j\right]_{j=1}^{\funconscount}.
\end{equation}
Now, our ask can be seen to requiring the DMD model \(\mymat{A}\) lie in the affine subspace,
\begin{equation}\label{eq:defn__IC_DMD_Constraints}
	\mathcal{A}~:=~\{\mymat{A}~|~\mymat{A} \gcon~=~\gconplus~\&~\fcon^H \mymat{A}~=~(\fconplus)^H\}.
\end{equation}
The model represented by \(\mymat{A}_{\rm Exact}\) \cref{eq:DMD_ObjectiveFunction} may not always respect the above requirement.
Nonetheless, this shortcoming can be easily ameliorated by simply selecting
an element of \(\mathcal{A}\) that best maps \(\mymat{X}\) onto \(\mymat{Y}\):
\begin{equation}\label{eq:IC_DMD}
	\optA_{\rm IC} \in \arg\min_{\mymat{A} \in \mathcal{A} }~ \lVert \mymat{A} \mymat{X} - \mymat{Y} \rVert_{F}^2 .
\end{equation}
The above minimization defines \algoname~(\algonick)~ and  \(\optA_{\rm IC}\) will alternatively be called the \algonick~ matrix.
\textit{One} way of choosing \(\optA_{\rm IC}\) is described by the following theorem:
\begin{theorem}\label{thm:Astar_IC_is_a_minimizer}
	Suppose the constraints defining \(\mathcal{A}\) are compatible and possess no redundancies i.e.
	\begin{subequations}\label{eq:Assumptions_4_IC_DMD}
		\begin{align}
			\textrm{Compatibility:}&~~~~	\fcon^H \gconplus~=~(\fconplus)^H \gcon. \label{eq:Assumption_Compatibility}\\
			\textrm{No redundancies:}&~~~~	\gcon~ \mathrm{and}~ \fcon ~\textrm{have full column rank}.		\label{eq:Assumption_No_Redunancies}
		\end{align}
	\end{subequations}
	Let the matrices \(\optC_0\) and \(\optA_{\rm lsq}\) be defined as follows:
	\begin{subequations}
		\begin{align}
			\optC_0~&:=~\gconplus \gcon^\dagger + \left( \fcon^\dagger \right)^H (\fconplus)^H \gcon^\perp \left( \gcon^\perp \right)^H \label{eq:defn_C0_star},\\
			\optA_{\rm lsq}~&:=~ \left(\fcon^\perp\right)^H (\mymat{Y} - \optC_0 \mymat{X}) \left( \left( \gcon^\perp \right)^H \mymat{X} \right)^\dagger \label{eq:defn_Astar_lsq}.
		\end{align}
	\end{subequations}
	Then, the matrix 
	\begin{displaymath}
		\optC_0 + \fcon^\perp  	\optA_{\rm lsq}  \left( \gcon^\perp \right)^H
	\end{displaymath}
	is a minimizer of \cref{eq:IC_DMD}, and hence constitutes a valid choice for \(\optA_{\rm IC}\).
\end{theorem}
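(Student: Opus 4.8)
The plan is to convert the constrained regression \cref{eq:IC_DMD} into an equivalent \emph{unconstrained} least-squares problem by parametrizing the feasible affine set $\mathcal{A}$ explicitly. First I would check that $\optC_0$ is itself feasible, so that it can serve as a particular solution. The right constraint $\optC_0 \gcon = \gconplus$ follows at once from $\gcon^\dagger \gcon = \mymat{I}$ (valid because \cref{eq:Assumption_No_Redunancies} makes $\gcon$ full column rank) together with $(\gcon^\perp)^H \gcon = 0$. The left constraint $\fcon^H \optC_0 = (\fconplus)^H$ is the more delicate one: after using $\fcon^H (\fcon^\dagger)^H = \mymat{I}$, the compatibility hypothesis \cref{eq:Assumption_Compatibility} is exactly what lets me replace $\fcon^H \gconplus$ by $(\fconplus)^H \gcon$, after which the identity $\gcon \gcon^\dagger + \gcon^\perp (\gcon^\perp)^H = \mymat{I}$ collapses the two terms to $(\fconplus)^H$. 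Since $\fcon^H \fcon^\perp = 0$ and $(\gcon^\perp)^H \gcon = 0$, adjoining the correction $\fcon^\perp \optA_{\rm lsq} (\gcon^\perp)^H$ disturbs neither constraint, so the proposed matrix lies in $\mathcal{A}$ for \emph{any} value of $\optA_{\rm lsq}$.

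Next I would show that $\mathcal{A}$ is precisely the affine set $\{\optC_0 + \fcon^\perp \mymat{M} (\gcon^\perp)^H : \mymat{M}\ \text{arbitrary}\}$. Having placed $\optC_0$ in $\mathcal{A}$, it suffices to identify the tangent (homogeneous) space $\{\mymat{A}_0 : \mymat{A}_0 \gcon = 0,\ \fcon^H \mymat{A}_0 = 0\}$ with $\{\fcon^\perp \mymat{M} (\gcon^\perp)^H\}$. Using the full column rank of $\gcon$ and $\fcon$, the condition $\mymat{A}_0 \gcon = 0$ is equivalent to $\mymat{A}_0 = \mymat{A}_0 \gcon^\perp (\gcon^\perp)^H$ (rows of $\mymat{A}_0$ confined to $\mathcal{R}(\gcon)^\perp$), while $\fcon^H \mymat{A}_0 = 0$ is equivalent to $\mymat{A}_0 = \fcon^\perp (\fcon^\perp)^H \mymat{A}_0$ (columns confined to $\mathcal{R}(\fcon)^\perp$); combining the two gives $\mymat{A}_0 = \fcon^\perp \big[(\fcon^\perp)^H \mymat{A}_0 \gcon^\perp\big] (\gcon^\perp)^H$, and the reverse inclusion is immediate. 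This reduces \cref{eq:IC_DMD} to the unconstrained problem $\min_{\mymat{M}} \lVert \fcon^\perp \mymat{M} (\gcon^\perp)^H \mymat{X} - (\mymat{Y} - \optC_0 \mymat{X}) \rVert_F^2$.

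Finally I would solve this reduced problem. Writing the residual $\mymat{R} := \mymat{Y} - \optC_0 \mymat{X}$ and completing the orthonormal columns of $\fcon^\perp$ to a unitary matrix, left-multiplication by that unitary preserves the Frobenius norm and splits the objective as $\lVert \mymat{M} (\gcon^\perp)^H \mymat{X} - (\fcon^\perp)^H \mymat{R} \rVert_F^2$ plus a term independent of $\mymat{M}$ (the component of $\mymat{R}$ inside $\mathcal{R}(\fcon)$). The first term is a standard matrix least-squares problem whose normal equations are solved by $\mymat{M} = (\fcon^\perp)^H \mymat{R} \big((\gcon^\perp)^H \mymat{X}\big)^\dagger$, which is exactly $\optA_{\rm lsq}$ as given in \cref{eq:defn_Astar_lsq}; substituting back yields the claimed minimizer. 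I expect the principal obstacle to be the feasibility argument of the first paragraph, namely recognizing that the compatibility condition \cref{eq:Assumption_Compatibility} is the precise algebraic statement decoupling the left and right constraints and guaranteeing $\mathcal{A} \neq \emptyset$, and managing the interplay of the two one-sided pseudo-inverse identities with the projector decomposition of $\mymat{I}$. Once $\optC_0$ is shown feasible, the remaining steps follow routinely from the orthonormality of $\fcon^\perp$ and $\gcon^\perp$.
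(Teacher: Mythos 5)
Your proposal is correct and follows essentially the same route as the paper: parametrize \(\mathcal{A}\) as \(\optC_0 + \fcon^\perp \tilde{\mymat{A}} (\gcon^\perp)^H\) (the paper's \cref{lem:Parametrization_mathcal_A}), reduce \cref{eq:IC_DMD} to an unconstrained least-squares problem, and use the unitary completion of \(\fcon^\perp\) together with Frobenius-norm invariance to read off \(\optA_{\rm lsq}\). Your "particular solution plus homogeneous space" organization of the parametrization lemma is a mild stylistic variant of the paper's direct two-way set containment, but the underlying identities (the projector decompositions of \(\mymat{I}\), the full-column-rank pseudo-inverse relations, and the compatibility condition \cref{eq:Assumption_Compatibility}) are used in exactly the same way.
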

\begin{proof}
	Refer \cref{ss:Constructing__Astar_IC}.
\end{proof}

\subsection{Connections to existing DMD variants}\label{ss:DMD_Connections}
%TODO: Functional constraints can enfore boundary conditions
% Perhaps just add somewhere ??
% , and boundary conditions, when \cref{eq:DS4SDMD} corresponds to observing a PDE at uniform time-steps
\subsubsection{Affine DMD implicitly preserves the constant eigen-function}\label{sss:Preserve_constant_EF__for__Affine_DMD}
Let the constant function be the first observable in our dictionary \(\dictionary\) i.e.:
\begin{displaymath}
	\psi_1 = 1 \implies
	\mymat{X}~=~
	\begin{bmatrix}
		\myvec{1}_n^H \\ \, \\
		\rowselect{\mymat{X}}_2
	\end{bmatrix},
\quad
	\mymat{Y} ~=~
	\begin{bmatrix}
		\myvec{1}_n^H \\ \, \\
		\rowselect{\mymat{Y}}_2
	\end{bmatrix}.
\end{displaymath}
It is trivial to see that \(\psi_1\) is a Koopman eigenfunction with eigenvalue 1.

Suppose we perform DMD only using the observables in \(\dictionary\) other than \(\psi_1\).
This amounts to modeling \(\rowselect{\mymat{Y}}_2\) as the image of \(\rowselect{\mymat{X}}_2\) under a linear map.
Affine DMD \cite{hirsh2019centering} augments that prediction with an eponymous term, and its' model parameters are defined by the following minimization routine:
\begin{equation}\label{eq:defn_Affine_DMD}
	\reduced{\mymat{A}}, \reduced{\myvec{b}} :=
	\begin{cases}
		\arg \min_{\mymat{A}, \myvec{b}}~ \left\lVert \mymat{A} \right\rVert_F^2,  &\mathcal{R}\left(\rowselect{\mymat{Y}}_2^T\right) \subseteq \mathcal{R}\left(\mymat{X}^T\right) \\
		\textrm{subject to}\quad \mymat{A} \rowselect{\mymat{X}}_2 + \myvec{b} \myvec{1}_n^H = \rowselect{\mymat{Y}}_2 & \\ \\
		\arg \min_{\mymat{A}, \myvec{b}}~ \left\lVert \mymat{A} \rowselect{\mymat{X}}_2 + \myvec{b} \myvec{1}_n^H - \rowselect{\mymat{Y}}_2
		\right \rVert_{F}^2, &\mathcal{R}\left(\rowselect{\mymat{Y}}_2^T\right) \nsubseteq \mathcal{R}\left(\mymat{X}^T\right) % Still ain't uniquely defined since we may have a left-nullspace for \mymat{X}
	\end{cases}
\end{equation}
Regardless of which case holds true, Hirsch and co-workers established \cite{hirsh2019centering}, among other things, that the sample means  
\begin{equation}\label{eq:defn_Affine_DMD__Sample_means}
	\myvec{\mu}_{\mymat{X}} ~:=~ \rowselect{\mymat{X}}_2 \frac{\myvec{1}_\trainlength}{\trainlength}, \quad 
	\myvec{\mu}_{\mymat{Y}} ~:=~ \rowselect{\mymat{Y}}_2 
	\frac{\myvec{1}_{\trainlength}}{\trainlength},
\end{equation}
connect  \(\reduced{\mymat{A}}\) and \(\reduced{\myvec{b}}\) as below: 
\begin{equation}\label{eq:Affine_DMD_offset_from_rests}
	\reduced{\myvec{b}} ~=~ \myvec{\mu}_{\mymat{Y}} - \reduced{\mymat{A}} \myvec{\mu}_{\mymat{X}}.
\end{equation}

We will see that Affine DMD is effectively recovered by \algonick~ on the complete set of observables \(\dictionary\), provided the constant eigen-function is consistently mapped and no geometric invariant is encoded. 
In particular, the Affine DMD model from \cref{eq:defn_Affine_DMD} can always be used to construct a valid \algonick~ model i.e. a minimizer of \cref{eq:IC_DMD}.
Furthermore,  \cref{eq:Affine_DMD_offset_from_rests}, which is a relation between Affine DMD model parameters, is analogously observed in every solution to \cref{eq:IC_DMD}.

\begin{proposition}\label{prop:Affine_DMD__recovery}
	Suppose we make the following choices:
	\begin{equation}\label{eq:Constraints_for_Affine_DMD}
		\fcon~=~\fconplus~=
		\begin{bmatrix}
		1 \\ \myvec{0}_{n-1}
		\end{bmatrix}, \quad 
	\gcon~=~\gconplus~=~\mymat{0}.
	\end{equation}
	Then, the following is true:
	\begin{enumerate}
		\item Every minimizer of the \algonick~ minimization \cref{eq:IC_DMD} shares the same first row: 
		\begin{displaymath}
			\optA_{\rm IC} ~=~
			\begin{bmatrix}
				1 &  \\
				\left(\optA_{\rm IC}\right)_{21} & \left(\optA_{\rm IC}\right)_{22}
			\end{bmatrix}.
		\end{displaymath}
		\item The matrix,
		\begin{displaymath}
			\optA_{\rm Affine}~:=~
			\begin{bmatrix}
				1 & \\
				\reduced{\myvec{b}} & \reduced{\mymat{A}}
			\end{bmatrix},
		\end{displaymath}
		is a solution to \cref{eq:IC_DMD}.
		\item Every minimizer of \cref{eq:IC_DMD} also satisfies the following analogue of \cref{eq:Affine_DMD_offset_from_rests}:
		\begin{equation}\label{eq:IC_DMD__offsets_analogoues_to__Affine_DMD}
			\left(\optA_{\rm IC}\right)_{21} ~=~\myvec{\mu}_{\mymat{Y}} - \left( \optA_{\rm IC}\right)_{22} \myvec{\mu}_{\mymat{X}}.
		\end{equation}
	\end{enumerate}
\end{proposition}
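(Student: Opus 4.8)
The plan is to exploit how the choices \cref{eq:Constraints_for_Affine_DMD} degenerate the feasible set $\mathcal{A}$. First I would note that, because $\gcon=\gconplus=\mymat{0}$, the geometric constraint $\mymat{A}\gcon=\gconplus$ in \cref{eq:defn__IC_DMD_Constraints} is vacuous, while the functional constraint $\fcon^H\mymat{A}=(\fconplus)^H$ reads $\myvec{e}_1^H\mymat{A}=\myvec{e}_1^H$ and thus pins the first row of $\mymat{A}$ to $[\,1\;\;\myvec{0}^H\,]$. Consequently $\mathcal{A}$ is exactly the set of matrices carrying this top row, which every minimizer of \cref{eq:IC_DMD} inherits; this is the first assertion.

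For the second assertion I would partition any $\mymat{A}\in\mathcal{A}$ into its fixed top row and a lower block $[\,\myvec{a}\;\;\mymat{B}\,]$, and use that $\mymat{X}$ and $\mymat{Y}$ also share the top row $\myvec{1}_\trainlength^H$. Multiplying out shows the top row of $\mymat{A}\mymat{X}-\mymat{Y}$ is identically zero, so the \algonick~objective collapses to $\lVert \mymat{B}\rowselect{\mymat{X}}_2+\myvec{a}\myvec{1}_\trainlength^H-\rowselect{\mymat{Y}}_2\rVert_F^2$. This is precisely the Affine DMD residual appearing in the second branch of \cref{eq:defn_Affine_DMD}, under the identification $\mymat{B}\leftrightarrow\reduced{\mymat{A}}$ and $\myvec{a}\leftrightarrow\reduced{\myvec{b}}$, so minimizing \cref{eq:IC_DMD} over $\mathcal{A}$ is equivalent to minimizing that residual. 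It then suffices to verify that $(\reduced{\mymat{A}},\reduced{\myvec{b}})$ attains the minimal residual in both branches of \cref{eq:defn_Affine_DMD}: when $\mathcal{R}(\rowselect{\mymat{Y}}_2^T)\subseteq\mathcal{R}(\mymat{X}^T)$ the residual is driven to zero, which is trivially minimal, and otherwise the residual is minimized directly. Either way $\optA_{\rm Affine}$ solves \cref{eq:IC_DMD}.

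For the third assertion I would argue entirely with the reduced residual. Any minimizer $\optA_{\rm IC}$ has a lower block $[(\optA_{\rm IC})_{21}\;\;(\optA_{\rm IC})_{22}]$ that minimizes the convex quadratic $(\myvec{a},\mymat{B})\mapsto\lVert \mymat{B}\rowselect{\mymat{X}}_2+\myvec{a}\myvec{1}_\trainlength^H-\rowselect{\mymat{Y}}_2\rVert_F^2$. Stationarity in the offset $\myvec{a}$ forces the residual to annihilate $\myvec{1}_\trainlength$, i.e. $\big((\optA_{\rm IC})_{22}\rowselect{\mymat{X}}_2+(\optA_{\rm IC})_{21}\myvec{1}_\trainlength^H-\rowselect{\mymat{Y}}_2\big)\myvec{1}_\trainlength=\myvec{0}$; dividing by $\trainlength$ and inserting the sample means \cref{eq:defn_Affine_DMD__Sample_means} rearranges directly into \cref{eq:IC_DMD__offsets_analogoues_to__Affine_DMD}. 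I expect the only delicate point to be conceptual: the relation \cref{eq:Affine_DMD_offset_from_rests} must be shown for \emph{every} minimizer rather than for Hirsch's distinguished solution, and the resolution is to recognize it as the first-order stationarity condition in the offset, which is necessary at any minimizer of a convex quadratic and which holds uniformly across both branches of \cref{eq:defn_Affine_DMD} because the residual annihilates $\myvec{1}_\trainlength$ at any minimizer, whether the optimal residual is zero or positive.
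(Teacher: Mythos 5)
Your proof is correct, and parts (1) and (2) follow the same route as the paper (the paper is terse on part (2), simply citing the definitions; you fill in the reduction of the \algonick~objective to the Affine DMD residual, which is exactly the intended argument). Part (3) is where you genuinely diverge. The paper deliberately avoids the stationarity argument you propose: it flags in a footnote that differentiating the real-valued objective with respect to a potentially \emph{complex-valued} \(\mymat{A}_{21}\) is delicate (the objective is not holomorphic, so one must either split into real and imaginary parts, use Wirtinger calculus, or represent complex numbers as \(2\times 2\) real matrices), and instead decouples the objective by right-multiplying the residual with the unitary matrix \(\mymat{Q}=\bigl[\tfrac{\myvec{1}_\trainlength}{\sqrt{\trainlength}}~~\mymat{Q}_2\bigr]\), which splits it into a mean term \(\trainlength\lVert \mymat{A}_{22}\myvec{\mu}_{\mymat{X}}+\mymat{A}_{21}-\myvec{\mu}_{\mymat{Y}}\rVert_2^2\) that \(\mymat{A}_{21}\) can always zero out and a fluctuation term independent of \(\mymat{A}_{21}\). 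Your normal-equation argument reaches the same conclusion and is salvageable --- the partial minimization over \(\myvec{a}\) for fixed \(\mymat{B}\) is an ordinary least-squares problem in \(\myvec{a}\), whose solution necessarily makes the residual annihilate \(\myvec{1}_\trainlength\), zero optimal value or not --- but as written, ``first-order stationarity of a convex quadratic'' glosses over exactly the complex-differentiability issue the authors call out; to be airtight you should either phrase it as the least-squares normal equations in \(\myvec{a}\) (no gradient needed) or explicitly treat the real and imaginary parts. The paper's unitary-decoupling route buys a calculus-free argument valid verbatim over \(\mathbb{C}\); yours is shorter and more familiar but needs that one extra sentence of justification. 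You also correctly handle the point that the identity must hold for \emph{every} minimizer across both branches of \cref{eq:defn_Affine_DMD}, which the paper handles by the same two-case split.
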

\begin{proof}
	Refer \cref{ss:Proof__Affine_DMD__recovery}.
\end{proof}

\subsubsection{Higher-order DMD amounts to assimilating the relationships between delay-embedded observables}\label{sss:Encode_delays__for__HO_DMD}

Suppose \(\dictionary\), which is a vector of \(\dictionarylength\) observables, is constructed by taking \(\fundelayscount\) time delays of  a smaller observable vector, \(\undelayedictionary\), of length \(\undelayedictlength\).
\begin{equation}\label{eq:psi_from_delays_of_psibreve}
	\dictionary~=~
	\begin{bmatrix}
		\undelayedictionary \\
		U \circ	\undelayedictionary\\
		\vdots \\
		U^{\fundelayscount} \circ \undelayedictionary 
	\end{bmatrix}.
\end{equation}
Then, the corresponding DMD model, dubbed Higher-order DMD (HO-DMD), is ideally parametrized by only \(d+1\) reduced matrices \( \left\{ \left(\mymat{A}\right)_j \right\}_{j=1}^{d+1} \) and, more crucially, possesses the following block-Companion structure  \cite{le2017higher}:
	\begin{displaymath}
	\mymat{A}_{\rm HO-DMD}~=~
	\begin{bmatrix}
		& \mymat{I}_{\undelayedictlength} & & \\[0.3cm]
		& & \ddots & \\[0.4cm]
		& & & \mymat{I}_{\undelayedictlength}\\[0.2cm]
		\left(\mymat{A}\right)_1 & \left(\mymat{A}\right)_2 & \hdots  & \left(\mymat{A}\right)_{\fundelayscount+1}
	\end{bmatrix}.
\end{displaymath} % In practice, this structure is neglected in favour of a compressed representation.

It is easy to show that encoding the dependencies among delay-embedded observables within the \algonick~ framework also induces the above block-Companion structure.
\begin{proposition}\label{prop:HO_DMD__connect}
Suppose the observables in \(\dictionary\) are generated by delay-embedding a smaller dictionary \(\undelayedictionary\) as described in \cref{eq:psi_from_delays_of_psibreve}.	
If we let
	\begin{equation}\label{eq:Constraints_for_HO_DMD}
		\fcon~=~
	\begin{bmatrix}
		\mymat{I}_{\undelayedictlength \fundelayscount} \\
		~
	\end{bmatrix}\quad \textrm{and} \quad
	\fconplus~=~
	\begin{bmatrix}
		\\
		\mymat{I}_{\undelayedictlength \fundelayscount}
	\end{bmatrix},
\end{equation}
then every IC-DMD model defined by \cref{eq:IC_DMD} has the following block Companion structure:
	\begin{displaymath}
	\optA_{\rm IC}~=~
	\begin{bmatrix}
		& \mymat{I}_{\undelayedictlength} & & \\[0.3cm]
		& & \ddots & \\[0.4cm]
		& & & \mymat{I}_{\undelayedictlength}\\[0.2cm]
		\left(\reducedoptA_{\rm IC}\right)_1 & \left(\reducedoptA_{\rm IC}\right)_2 & \hdots  & \left(\reducedoptA_{\rm IC}\right)_{\fundelayscount+1}
	\end{bmatrix}.
\end{displaymath}
The sub-matrices \( \left\{\left(\reducedoptA_{\rm IC}\right)_j\right\}_{j=1}^{d+1} \) define \(\optA_{\rm IC}\) as described by \cref{eq:IC_DMD}.
\end{proposition}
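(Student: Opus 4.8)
The plan is to recognize that the prescribed pair $(\fcon,\fconplus)$ is simply an encoding of the shift relations among the delay blocks of $\dictionary$, and that the resulting functional constraint in \cref{eq:defn__IC_DMD_Constraints} pins the top $\undelayedictlength\fundelayscount$ rows of every feasible $\mymat{A}$ to exactly the shifted-identity part of the block-Companion form. First I would record why these matrices are the correct encoding. By \cref{eq:psi_from_delays_of_psibreve}, the first $\undelayedictlength\fundelayscount$ entries of $\dictionary$ are the blocks $\undelayedictionary, U\circ\undelayedictionary, \dots, U^{\fundelayscount-1}\circ\undelayedictionary$, and applying $U$ carries these to $U\circ\undelayedictionary, \dots, U^{\fundelayscount}\circ\undelayedictionary$, i.e.\ to the last $\undelayedictlength\fundelayscount$ entries of $\dictionary$. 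Hence each of the first $\undelayedictlength\fundelayscount$ entries, namely $\myvec{e}_j^H\dictionary$ for $j=1:\undelayedictlength\fundelayscount$, is mapped by $U$ back into the span of $\dictionary$ as $\myvec{e}_{j+\undelayedictlength}^H\dictionary$; collecting the $\myvec{e}_j$ as columns of $\fcon$ and the $\myvec{e}_{j+\undelayedictlength}$ as columns of $\fconplus$ reproduces exactly the stated matrices and shows the functional constraints are compatible and in the span of $\dictionary$.

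Next I would translate the abstract constraint $\fcon^H\mymat{A} = (\fconplus)^H$ into a statement about the rows of $\mymat{A}$. Since $\fcon^H = [\mymat{I}_{\undelayedictlength\fundelayscount},\,\mymat{0}_{\undelayedictlength\fundelayscount\times\undelayedictlength}]$ merely selects the first $\undelayedictlength\fundelayscount$ rows, while $(\fconplus)^H = [\mymat{0}_{\undelayedictlength\fundelayscount\times\undelayedictlength},\,\mymat{I}_{\undelayedictlength\fundelayscount}]$ is the block-shift matrix, the constraint forces the first $\undelayedictlength\fundelayscount$ rows of $\mymat{A}$ to equal $[\mymat{0}_{\undelayedictlength\fundelayscount\times\undelayedictlength},\,\mymat{I}_{\undelayedictlength\fundelayscount}]$. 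Reading this one height-$\undelayedictlength$ block row at a time shows that the $k$-th block row carries $\mymat{I}_{\undelayedictlength}$ in its $(k+1)$-th block column and zeros elsewhere, for $k=1:\fundelayscount$ — precisely the upper shift part of the asserted block-Companion structure.

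To close, I would note that the proposition encodes no geometric invariants, so $\gcon,\gconplus$ are empty, the constraint $\mymat{A}\gcon=\gconplus$ is vacuous, and $\mathcal{A}$ is exactly the affine set of matrices whose top $\undelayedictlength\fundelayscount$ rows are fixed as above. Every minimizer $\optA_{\rm IC}$ of \cref{eq:IC_DMD} lies in $\mathcal{A}$ by definition and thus inherits this top structure verbatim; the only remaining freedom sits in the bottom $\undelayedictlength$ rows, which I partition into $\fundelayscount+1$ width-$\undelayedictlength$ blocks $(\reducedoptA_{\rm IC})_1,\dots,(\reducedoptA_{\rm IC})_{\fundelayscount+1}$ that are then determined by the least-squares objective. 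The computation is light, so the only real obstacle is the index bookkeeping: confirming that $\fcon^H$ extracts the top rows rather than columns, that $(\fconplus)^H$ is the shift rather than its transpose, and that the block partition is consistent with $\dictionarylength=\undelayedictlength(\fundelayscount+1)$. It is also worth verifying feasibility — $\fcon$ has full column rank $\undelayedictlength\fundelayscount$ and $\mathcal{A}$ is nonempty, so minimizers exist and the assumptions of \cref{thm:Astar_IC_is_a_minimizer} hold — though the structural conclusion follows from membership in $\mathcal{A}$ alone and does not require that theorem.
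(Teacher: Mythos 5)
Your proposal is correct, and it fills in precisely the argument the paper leaves implicit (the paper offers no written proof, remarking only that the claim ``is easy to show''): the constraint \(\fcon^H \mymat{A} = (\fconplus)^H\) with the stated \(\fcon,\fconplus\) fixes the top \(\undelayedictlength\fundelayscount\) rows of every element of \(\mathcal{A}\) to the block-shift \([\mymat{0},\ \mymat{I}_{\undelayedictlength\fundelayscount}]\), and \(\optA_{\rm IC}\in\mathcal{A}\) by definition of \cref{eq:IC_DMD}. Your additional checks --- that the columns of \(\fcon,\fconplus\) genuinely encode \(U\circ\psi_j=\psi_{j+\undelayedictlength}\) from \cref{eq:psi_from_delays_of_psibreve}, that the geometric constraints are vacuous, and that feasibility holds without needing \cref{thm:Astar_IC_is_a_minimizer} --- are all consistent with the paper's framework.
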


%TODO: Do we need to talk about the SVD truncation at two levels ? Do we even do anything about it ?
%\comment{HO-DMD recognizes the structure, but doesn't account for it in the computation.Same goes for HO-EDMD.}{Put as remark and verify.}

\subsection{Duality-induced eigen-functions}\label{ss:Induced_EFs}
Suppose there is a non-zero complex number \(\lambda\) and a matrix \(\gconequalizer\) with linearly independent columns such that
\begin{equation}\label{eq:defn_gconequalizer}
	\gconplus\gconequalizer~=~\lambda~ \gcon \gconequalizer.
\end{equation}
This scenario is possible if at least one column of \(\gcon\) represents a fixed point or if a subset of the columns fully encodes limit cycle.
For instance, if \(\gcon\) is produced by sampling \(\dictionary\) at a bunch of fixed points, then \cref{eq:defn_gconequalizer} is satisfied for  \(\widehat{\mymat{V}} = \mymat{I}\) and \(\lambda~=~1\).
Similarly, if the columns of \(\gcon\) correspond to sampling a limit cycle, then \cref{eq:defn_gconequalizer} holds for  \(\widehat{\mymat{V}} = \myvec{1}\) and \(\lambda~=~1\).
When both fixed points and limit cycles are present, \(\widehat{\mymat{V}}\) will select all fixed points and average each limit cycle.

The columns of the matrix, 
\begin{equation}\label{eq:defn_gcon_0}
	\gcon_0~:=~\gcon \gconequalizer,
\end{equation}
are linearly independent right eigen-vectors of \(\optA_{\rm IC}\) with eigenvalue \(\lambda\), i.e.,
\begin{displaymath}
	\optA_{\rm IC} \gcon_0~=~ \lambda~ \gcon_0.
\end{displaymath}
The linear independence is due to the full column rank of \(\gconequalizer\) and \(\gcon\), while the eigen-relation falls from \cref{eq:IC_DMD} which says that \(\optA_{\rm IC} \in \mathcal{A}\).
Consequently, by duality, there exists a full column rank matrix \(\mymat{W}\) such that 
\begin{subequations}\label{eq:defn_Induced_EFs}
	\begin{align}
		\mymat{W}^H \optA_{\rm IC}~&=~ \lambda \mymat{W}^H \label{eq:Induced_EFs_are_EFs}\\
		\mymat{W}^H \gcon_0~&=~\mymat{I} . \label{eq:Induced_EF_reps_are_dual_2_gcon0}
	\end{align}
\end{subequations}
In other words, the functions represented by the columns of \(\mymat{W}\), viz. \(\{\myvec{w}_i^H \dictionary\}_i \), are approximate eigen-functions of the Koopman operator with eigenvalue \(\lambda\).
These objects, henceforth termed \emph{induced eigen-functions} to highlight their provenance, crystallize the benefits of encoding geometric invariants in \algonick.
Specifically, \algonick~ enjoys the following two advantages that are difficult to replicate in \naiveDMDAcronym:
\begin{enumerate}
	\item The linear system \cref{eq:defn_Induced_EFs} always has a solution when the assumptions used to generate \(\optA_{\rm IC},~\gcon_0~\textrm{and}~\lambda\) are met.
	In comparison, even the linear system  \cref{eq:Induced_EFs_are_EFs}, let alone \cref{eq:defn_Induced_EFs}, is unlikely to have an exact solution if \(\optA_{\rm IC}\)  were replaced with \(\naiveDMDMatrix\).
	%TODO: Does the least squares solution have an interpretation in resDMD framework ?
	\item 	Knowledge of the eigen-value \(\lambda\) and the duality condition \cref{eq:Induced_EF_reps_are_dual_2_gcon0} enables, in theory, the computation of the associated induced eigen-functions \emph{without an explicit eigen-decomposition}. 
	This stands in sharp contrast to the analogous procedure for finding an eigen-function with eigen-value \(\lambda\) from an \naiveDMDAcronym~ model: Perform an eigen-decomposition of \(\naiveDMDMatrix\), locate the eigen-value closest to \(\lambda\) and peruse the associated eigen-function.
	%TODO: CITE !!
\end{enumerate}
\begin{remark}
	In practice, solving \cref{eq:Induced_EFs_are_EFs,eq:Induced_EF_reps_are_dual_2_gcon0} simultaneously yields a numerically inferior estimate of \(\mymat{W}\) than the sequential approach of solving \cref{eq:Induced_EFs_are_EFs} followed by \cref{eq:Induced_EF_reps_are_dual_2_gcon0}.
	Hence, we adopt the latter strategy in the upcoming computational studies.
\end{remark}

\section{Numerical experiments}

%TODO: Change x to \state in plot captions.
%TODO: Distill figure captions

We visualize induced eigen-functions, as defined in \cref{eq:defn_Induced_EFs} and associated with an eigen-value \(\lambda ~=~1\), for simple low-dimensional examples\footnote{The pertinent code can be found at \url{https://github.com/gowtham-ss-ragavan/IC-DMD.git}}.
Each example possesses the following common attributes: 
\begin{enumerate}
	\item The underlying discrete-time system is generated by observing an autonomous ordinary differential equation (ODE) at equi-spaced times.
	In particular, if the ODE, 
	\begin{displaymath}
		\dot{\state} ~=~ \velocity(\state),
	\end{displaymath}
	possesses the flow-map \(\flow\), i.e.
	\begin{displaymath}
		\flow(0, \state)~=~ \state, \quad (\partial_1\flow)(t, \state)~=~ \velocity(\flow(t, \state)),
	\end{displaymath}
	then, we pick some positive real number \(k\) and choose the update law \(\Gamma\) in \cref{eq:DS4SDMD} to be
	\begin{displaymath}
		\Gamma(\state) ~:=~ \flow(k, \state).
	\end{displaymath}
	\item All fixed points and limit cycles of \cref{eq:DS4SDMD} lie well within the \(\ssdim-\)dimensional hyper-cube \([-1,1]^\ssdim\). 
	Hence, we only sample this subset of \(\mathbb{R}^\ssdim\), in an equi-spaced fashion, to generate \(\mymat{X}\).
	\item The \algonick~ model encodes every fixed point and limit cycle, along with the constant function\footnote{Which is a trivial eigen-function with eigen-value 1.}.
	The sole exception to this choice is the very first study (see \cref{fig:1DStable_Comparison}) which probes the differential gain from specifying geometric invariants.
	\item The choice of dictionary always ensures \cref{eq:Assumptions_4_IC_DMD} holds. 
	So, we recover as many induced eigen-functions as the number of geometric invariants encoded.
\end{enumerate}

% 1D multi-stable
We begin by observing the multi-stable one-dimensional ODE,
\begin{equation}\label{eq:1D_Stable}
	\dot{\state} = -\left( \state + \frac{1}{2} \right) \left( \state - \frac{2}{10} \right) \left( \state - \frac{7}{10} \right),
\end{equation}
at time-steps of \(k = 0.1\).
The resulting discrete-time system has three hyperbolic fixed points at \( \opt{\state} = -\frac{1}{2},~\frac{2}{10}~\textrm{and}~\frac{7}{10}\).
The fixed point at \(\frac{2}{10}\) is unstable while the other two are stable.

% Trig functions
% Aim + details
The utility of consistently mapping the three fixed points can be explored by incrementally adding in the associated constraints to \algonick.
This can be done in three steps: Begin with an \naiveDMDAcronym~ model, add in the constraint associated to the constant eigen-function and then encode the three fixed points.
Each of these three models can generate approximate Koopman eigen-functions with eigen-value 1.
For the first model, we choose the \naiveDMDAcronym~ eigen-functions whose eigen-values are closest to 1 and then search for a solution to \cref{eq:Induced_EF_reps_are_dual_2_gcon0} in their span.
For the remaining two models, eigen-function estimates are obtained by sequentially solving \cref{eq:defn_Induced_EFs}.
Overall, we get three approximate Koopman eigen-functions for each model.
All of these are visualized in \cref{fig:1DStable_Comparison}, where the models are distinguished by color and grouped by the fixed point that is theoretically supposed, by \cref{eq:Induced_EF_reps_are_dual_2_gcon0}, to lie in the \(1-\)level set.
%TODO: Regenerate plots with x --> \state
% Keeping it as x for now so that plots can be read without confusion.
\begin{figure}[]
	\centering
	\subfloat[\(x^* = -\frac{1}{2} \)]{\includegraphics[width = 0.48 \linewidth]{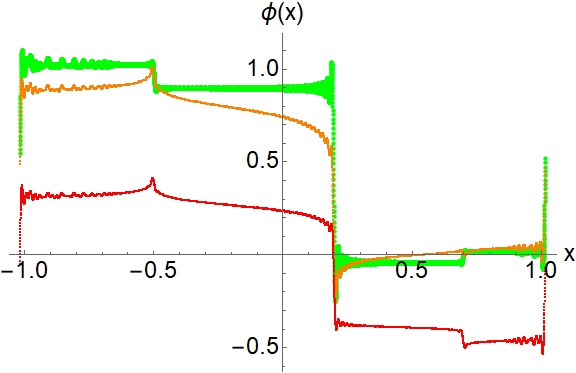}} \quad 
	\subfloat[\(x^* = \frac{2}{10}\)]{\includegraphics[width = 0.48 \linewidth]{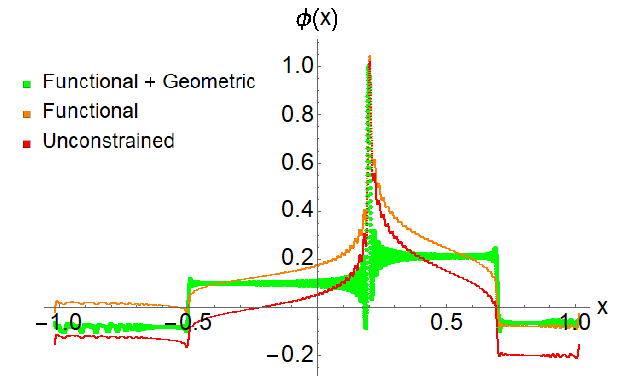}} \\
	\subfloat[\(x^* = \frac{7}{10}\)]{\includegraphics[width = 0.48 \linewidth]{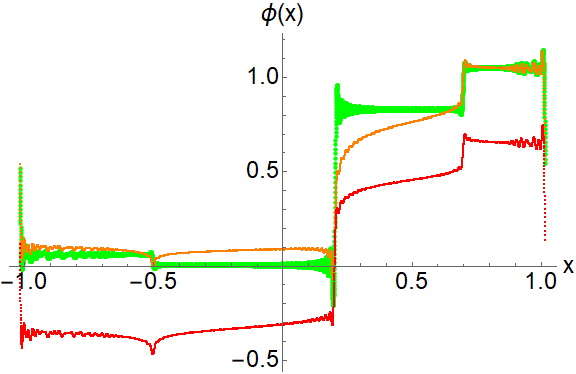}}
	\caption{Unitary eigen-functions for the discrete-time system obtained by observing the ODE \cref{eq:1D_Stable} with time-steps of \(k = 0.1\).
		In each plot, every curve depicts the best approximation to a Koopman eigen-function with eigen-value 1, using a specific DMD model.
		The red curves are obtained from \naiveDMDAcronym, by effectively solving \cref{eq:defn_Induced_EFs} using \(\naiveDMDMatrix\) instead of \(\optA_{\rm IC}\).
		The orange curves are induced eigen-functions from \algonick, when the constant eigen-function is consistently mapped, but without encoding any fixed point.
		The green curves are an upgrade of their orange counterparts, when all three fixed points are also specified in \algonick.
		Common to all models is a dictionary of trigonometric functions and around \(10,000\) equi-spaced training points to generate \(\mymat{X}\) and \(\mymat{Y}\).
		Every figure caption highlights a point that is theoretically expected to be in the \(1\)-level set of each visualized function.
		The utility of these functions is quantified by the extent to which their level sets are invariant.
		Each approximation manages to capture set invariance at-least locally.
		However, the best global performance is seen in the fully-informed \algonick~ model (colored in green).
		% Even here, the size of the jump alludes to the stability of the coresponding fixed point.
		%TODO: Get the dictionary specifications !!
		%			\footnote{Trig functions with wavelength until (1/60) (?), with approximately 10,000 equi-spaced ICs . Map corresponds to a propagation of \(k = 0.1\).\(\mathbf{1}\) preserved.
		}
	\label{fig:1DStable_Comparison}
\end{figure}
% What you expect & what you see
Any approximation of a Koopman eigen-function with eigen-value 1 must, by \cref{thm:Invariance_via_Koopman}, take a constant value over every invariant set.
The \naiveDMDAcronym~ model, shown in red, demonstrates a reasonable agreement to the left of \(-\frac{1}{2}\) and to the right of \(\frac{7}{10}\).
But, this performance deteriorates over the invariant sets \(\left[-\frac{1}{2}, \frac{2}{10}\right]\) and \(\left[\frac{2}{10}, \frac{7}{10}\right]\), as particularly seen in panel (b).
These flaws seem to persist in the \algonick~ model that only encodes the constant eigen-function (shown in orange).
On the upshot, it seems to satisfy \cref{eq:Induced_EF_reps_are_dual_2_gcon0} by reducing to the appropriate indicator function upon restriction to the subset \(\{-\frac{1}{2},~\frac{2}{10},~\frac{7}{10}\}\).
This benefit is retained by the \algonick~ model that also encodes the fixed points (shown in green).
More importantly, it also does a much better job at ensuring a constant value over invariant sets.

% Stuff that theory alone doesn't tell you off
Despite the differences between these three models, there is a common signature of the stability possessed by each fixed point.
The variation exhibited (by an induced eigen-function) across any fixed point seems indicative of its' stability.
In every panel of \cref{fig:1DStable_Comparison}, the stable fixed points, \(\opt{\state} = -\frac{1}{2},~\frac{7}{10}\), see much smaller changes when compared to the unstable fixed point \(\opt{\state} = \frac{2}{10}\).

In addition, we note that our choice of dictionary is sub-optimal for representing induced eigen-functions.
In particular, we've been using smooth functions to approximate what appears to be a piecewise discontinuous function.
Hence, for the forthcoming studies, we switch to observables that can better accommodate discontinuities.
Specifically, we choose indicator functions whose supports uniformly partition the hyper-cube \([-1,1]^\ssdim\).

The benefits of switching to a discontinuity-friendly dictionary materialize in \cref{fig:1DStable}.
Here, we visualize the induced eigen-functions obtained by propagating the dictionary update onto the \algonick~ model that encodes all the fixed points.
\begin{figure}[]
	\centering
	\subfloat[\(x^* = -\frac{1}{2}\)]{\includegraphics[width = 0.48 \linewidth]{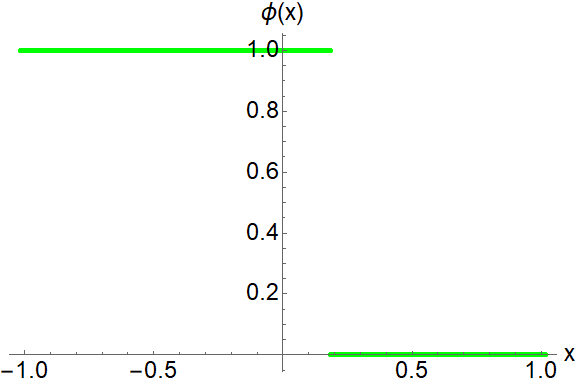}} \quad 
	\subfloat[\(x^* = \frac{2}{10}\)]{\includegraphics[width = 0.48 \linewidth]{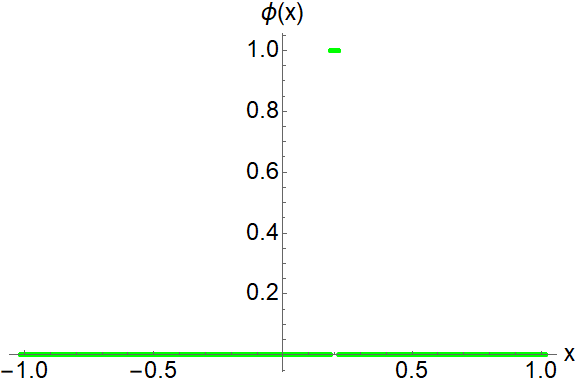}} \\
	\subfloat[\(x^* = \frac{7}{10}\)]{\includegraphics[width = 0.48 \linewidth]{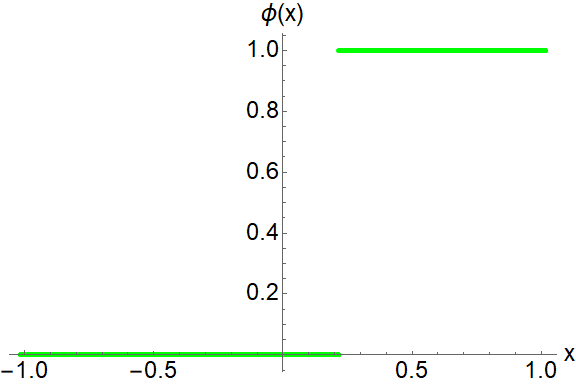}}
	\caption{Unitary eigen-functions for the discretization of \cref{eq:1D_Stable} with a time-step of \(k = 0.1\).
	These are obtained by re-building the geometrically consistent \algonick~ model from \cref{fig:1DStable_Comparison}, using discontinuity-friendly observables.
	Specifically, the new dictionary comprises of around 60 indicator functions.
	Their supports partition the interval \([-1,1]\) into sub-intervals of equal length. 
	The training data comprises of nearly 12,000 equi-spaced samples on \([-1,1]\).
	Evidently, indicator functions neatly resolve the discontinuities possessed by unitary Koopman eigen-functions, across different invariant sets.
	Indeed, this particular dictionary seems to correlate the stability of every fixed point with continuity at the same point and the size of any encapsulating non-zero level set.
		%	\footnote{61 equally spaced Box functions, with 199 equi-spaced ICs each. Map corresponds to a propagation of \(k = 0.1\).\(\mathbf{1}\) preserved.}
	}
	\label{fig:1DStable}
\end{figure}
Within every invariant set, all three eigen-functions are practically constant.
Furthermore, they reflect the stability of each fixed point much better than in \cref{fig:1DStable_Comparison}.
Every eigen-function is \textit{continuous} across the stable fixed points \(\opt{\state}~=~-\frac{1}{2},~\frac{7}{10}\), and \textit{discontinuous} at the unstable fixed point \(\opt{\state}~=~\frac{2}{10}.\)

All three fixed points in \cref{eq:1D_Stable} are hyperbolic.
We can check if this commonality contributes to the stability signature discussed above, by studying the following modification of \cref{eq:1D_Stable}:
\begin{equation}\label{eq:1D_HalfStable}
	\dot{\state} = \left(\state + \frac{1}{2}\right) \left(\state - \frac{2}{10}\right)^2 \left(\state - \frac{7}{10}\right).
\end{equation}
Even though we have the same fixed points,  \(\opt{\state} = \frac{2}{10}\) is non-hyperbolic.
The other two remain hyperbolic, but \(\opt{\state} = \frac{7}{10}\) is now unstable.
The only stable fixed point is \(\opt{\state} = -\frac{1}{2}.\)
\Cref{fig:1DHalfStable} shows the induced eigen-functions produced by encoding all fixed points and the constant function in \algonick.
%  remain invariant sets.
\begin{figure}[]
	\centering
	\subfloat[\(x^* = -\frac{1}{2}\)]{\includegraphics[width = 0.48 \linewidth]{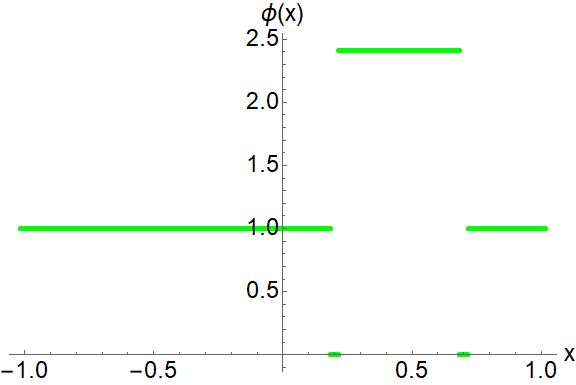}} \quad 
	\subfloat[\(x^* = \frac{2}{10}\)]{\includegraphics[width = 0.48 \linewidth]{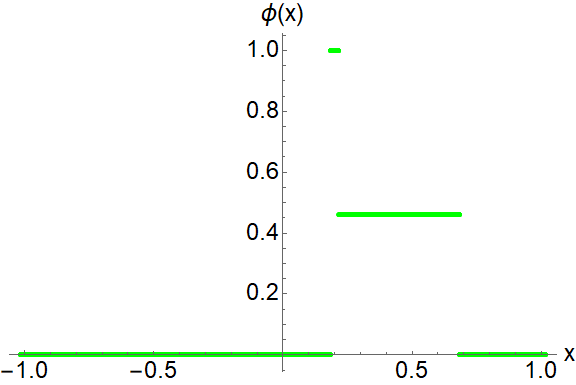}} \\
	\subfloat[\(x^* = \frac{7}{10}\)]{\includegraphics[width = 0.48 \linewidth]{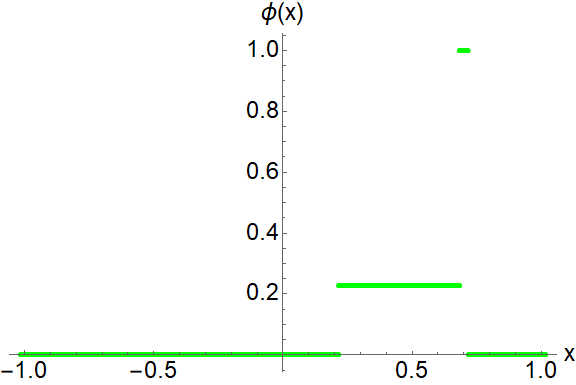}}
	\caption{Unitary eigen-functions of \cref{eq:1D_HalfStable}, when the latter is observed over time-steps of \(k = 0.1\).
	The \algonick~ model is built with the same parameters used for \cref{fig:1DStable}.
	In particular, all three fixed points and the constant eigen-function are encoded as constraints.
	The level sets of induced eigen-functions remain invariant, even-though the fixed point at \(0.2\) is now non-hyperbolic.
	As before, continuity and the size of any encapsulating non-zero level set appear to be indicators of fixed point stability.
	In panel (a), the \(1-\) level set contains the stable fixed point at \(-\frac{1}{2}\) in its' interior and has a non-zero length.
	In contrast, its' counterparts in the other panels contain unstable fixed points, are effectively jump discontinuities, and so possess almost vanishing length.
	%	\footnote{Same parameters as previous example.}	
}

	\label{fig:1DHalfStable}
\end{figure}
The induced eigen-functions continue to correctly depict set-invariance.
The intervals \((-\infty, -0.5]\), \( [-0.5, 0.2] \), \([0.2, 0.7]\) and  \([0.7, \infty)\) are invariant.
On each of these sets, every induced eigen-function remains constant.
Moreover, its' continuity at any fixed point continues to be indicative of the stability ascribed to the latter.

% Duffing
We now proceed to two dimensional systems, starting with the Duffing oscillator described by the following ODE:
\begin{equation}\label{eq:Duffing}
	\begin{aligned}
		\dot{\upsilon}_1 &= \upsilon_2, \\
		\dot{\upsilon}_2 &= -\upsilon_2 + \upsilon_1- 36~\upsilon_1^3.
	\end{aligned}
\end{equation}
We study the discrete-time system obtained by sampling \cref{eq:Duffing} at time steps of \(k = 1.6\).
This system has three fixed points, all of which lie on the line \(\upsilon_2~=~0\).
The fixed points at \(\opt{\upsilon}_1~=~\pm \frac{1}{6}\) are stable, while the one at \(\opt{\upsilon}_1~=~0\) is a saddle point.
In \cref{fig:Duffing}, we portray the induced eigen-functions generated by the \algonick~model that encodes all three fixed points and the constant function.
\begin{figure}[]
	\centering
	\subfloat[\(x^* = -\frac{1}{6}\)]{\includegraphics[width = 0.48 \linewidth]{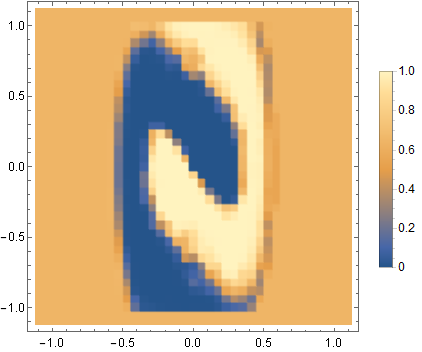}} \quad 
	\subfloat[\(x^* = 0\)]{\includegraphics[width = 0.48 \linewidth]{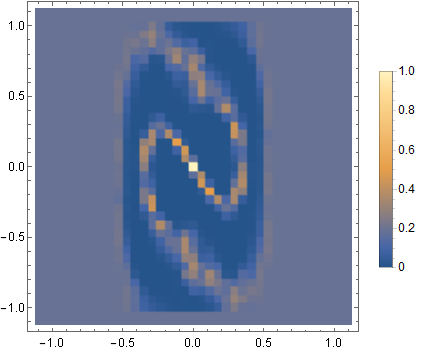}} \\
	\subfloat[\(x^* = \frac{1}{6}\)]{\includegraphics[width = 0.48 \linewidth]{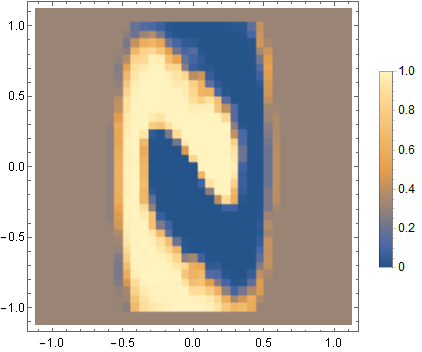}}
	\caption{Unitary eigen-functions* of the Duffing oscillator \cref{eq:Duffing}, seen under a temporal resolution of \(k = 1.6\).
	They are represented using a dictionary of 1225 indicator functions that are two-dimensional adaptations of those used in \cref{fig:1DStable,fig:1DHalfStable}.
	The training data is made of nearly 31,000 initial conditions.
	The \algonick~ model encodes all three fixed points, and the constant eigen-function.
	In each panel, the level set @ 1 approximates the basin-of-attraction associated with the fixed point it contains.
	Consequently, in panel (b), the \(1\) level set highlights the stable manifold of the saddle point located at the origin.
	%TODO: Perhaps highlight the fixed points with a red dot ???
	%		Priors encoded all 3 fixed points visible. 
	%		\footnote{ \(k = 1.6\). \(35 \times 35\) Box functions form a uniform  grid with 25 samples for each function. \(\mathbf{1}\) preserved}  
	}
	\label{fig:Duffing}
\end{figure}
Each stable fixed point possesses a basin-of-attraction, and these comprise the only non-trivial invariant sets in this system.
Within each of these sets, we note that every eigen-function (in \cref{fig:Duffing}) is constant, at least within the region \(|\upsilon_1| \leq 0.5\).
Inside this subset, we also see that the level set at \(1\) in panel (b) sketches the stable manifold of the saddle point at the origin.
Regrettably, set invariance is not accurately portrayed for \(|\upsilon_1| > 0.5\).
This could be due to the two invariant sets being so intertwined in this region, that resolving them is beyond the choice of observables and training data.
% EDMD paper also gets only this much in a qualitative sense, as with panels (a) and (b)
% But, they don't get (b), or at-least don't report it.

Fixed point stability continues to be correlated with induced eigen-functions.
In the one-dimensional systems studied earlier, continuity of an induced eigen-function at a fixed point was indicative of stability.
This argument can be generalized to higher dimensions, by checking continuity along all rays emanating from a fixed point.
However, a simpler alternative is to consider the measure possessed by any non-zero level set that contains the fixed point.
In panels (a) and (c) of \cref{fig:Duffing}, we see that the level set at \(1\) contains the stable fixed point at \(\opt{\upsilon}_1 = \mp \frac{1}{6}\) respectively, and has a non-zero measure.
In comparison, the level set at \(1\) in panel (b) contains the unstable fixed point at the origin, and has near-zero measure.
Hence, there is a numerical correlation between the stability of a fixed point and the measure of any non-zero level set (of an induced eigen-function with eigen-value 1) containing it.

% Competing limit cycles
We close our computational studies with a system admitting multiple limit cycles.
We consider the two-dimensional ODE written in polar coordinates,
\begin{displaymath}
	\upsilon_1~=~r \cos(\theta),\quad \upsilon_2~=~r \sin(\theta),
\end{displaymath}
as
\begin{equation}\label{eq:CompetingLimitCycles}
	\begin{aligned}
	\dot{r} &= r\left(r-\frac{1}{3}\right)\left(r - \frac{2}{3}\right), \\
	 \dot{\theta} &= 2 \pi,
	\end{aligned}
\end{equation}
and observe it at uniform time steps of \(k = \frac{1}{6}\).
The resulting discrete-time dynamics admits an unstable fixed point at the origin and a continuum of stable (unstable) period-6 orbits at \(r = \frac{1}{3}\) \((r = \frac{2}{3})\).
We compute an \algonick~ model incorporating the fixed point and 4 periodic orbits each from \(r = \frac{1}{3}\) and \(r = \frac{2}{3}\).
The resulting induced eigen-functions are grouped by their provenance and visualized in \cref{fig:2D_LimitCycles}.
%TODO: Figure out how this consolidation is done.
\begin{figure}[]
	\centering
	\subfloat[\(r^* = 0\)]{\includegraphics[width = 0.48 \linewidth]{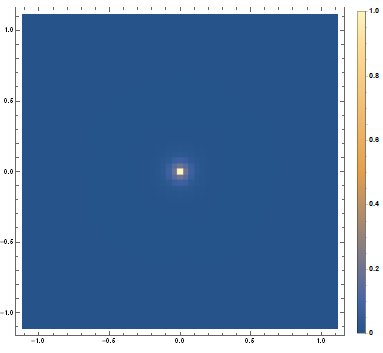}} \quad 
	\subfloat[\(r^* = 1/3\)]{\includegraphics[width = 0.48 \linewidth]{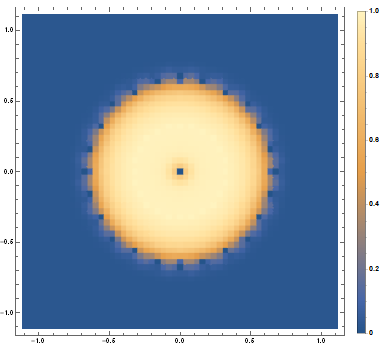}} \\
	\subfloat[\(r^* = 2/3\)]{\includegraphics[width = 0.48 \linewidth]{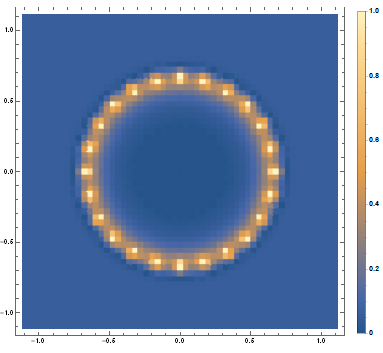}}
	\caption{Unitary eigen-functions associated to \cref{eq:CompetingLimitCycles} when sampled at time-steps of \(k = 1/6\).
	The observables are refinements of the indicator functions used in \cref{fig:Duffing}, and number around 2600.
	The \algonick~ model, which uses close to 13,000 initial conditions as training data, encodes the fixed point at the origin, both limit cycles and the constant eigen-function.
	As before, the stability of a geometric invariant correlates with the size of whichever non-zero level set contains it.
	In addition, the level set @ 1 in panel (b) delineates the basin-of-attraction corresponding to the stable limit cycle at \(r = 1/3\).
	%	\footnote{Sampling time . The dictionary comprised of \(51 \times 51\) Box functions forming a uniform  grid, with 5 samples per function. } 
	% TODO: Update above to talk about periodic orbits, instead of limit cycle.
	% TODO: Tis sum/product of induced EFs that take value of 1 on the limit cycle.
	% TODO: Make axes labels bigger.
	}
	\label{fig:2D_LimitCycles}
\end{figure}
The annular regions demarcated by the circles \(r = 0,~\frac{1}{3}\) and \(r = \frac{2}{3}\) are invariant.
Over each of these sets, the induced eigen-functions are constant.
In addition, the stability of a fixed point or limit cycle remains correlated with the measure of whichever non-zero level set encapsulates it.

\section{Conclusions and future work}
\algoname~ illustrates the benefits to integrating knowledge of invariants into \naiveDMDAcronym.
Two seemingly disparate variants of DMD, Affine DMD and Higher Order DMD, are seen to possess a common theme: consistent encoding of eigen-functions and functional dependencies within a dictionary.
The same principle applied in a geometric context generates, with less hassle than \naiveDMDAcronym, estimates for Koopman eigen-functions with eigen-value 1.
Their level sets are approximately invariant and, hence, of great practical import as they enable a reductionist analysis of nonlinear dynamics.

There are foundational and algorithmic aspects of \algoname~ that need to be understood and improved upon.
For instance, linearity of constraints is crucial for computational efficiency and the ability to estimate invariant sets.
However, the same property limits the extent to which functional constraints are integrated into the \algonick~ model.
For example, suppose we know that \(\phi\) is a Koopman eigen-function with eigen-value \(\lambda\).
Assume that the observables \(\{ \phi^i \}_{i=1}^\genericcount\) are all present in the span of the dictionary \(\dictionary\).
Then, we need to encode each of the \(\genericcount\) eigen-functions within \cref{eq:IC_DMD} \textit{individually} - a particularly cumbersome task if the observables are monomials.
This calls for a modification of \algonick~ so that specifying the eigen-function \(\phi\) automatically encodes the remaining \(\genericcount-1\) eigen-functions.

On an algorithmic note, estimating invariant sets via \algonick~ appears to be data-intensive, even for the low-dimensional problems considered here.
While this may be partly attributed to its' origins in \naiveDMDAcronym, we cannot neglect that we've been trying to either approximate a discontinuous function using smooth observables or integrate discontinuous functions.
Regardless, the approximation of Koopman eigen-functions with eigen-value 1 makes the case for considering dictionaries that can well-represent discontinuous functions.

Finally, estimating an induced eigen-function with eigen-value 1 requires accurate knowledge of the fixed point. 
This requirement can be a bottleneck, since a fixed point is usually found by a nonlinear solve and, hence, of limited resolution.
An easy solution could be to augment the associated geometric constraint with a consistent perturbation, and include the same deviation as a decision variable.
But, this leads to a bilinear program which is far less-tractable than a quadratic program.
Hence, there is a need for assimilating uncertainty while maintaining a comparable computational cost.

% We haven't looked at what happens if you specify eigen-function, and look at the mode.

% In theory, functional constraints contain far more information than geometric constraints.
% The latter only say something about how a single point is mapped, while the former encodes a property observed at any location in state-space.

% Makes the case for harboring discontinuities within observables.

\section*{Acknowledgments}

This work was supported by the Army Research Office (ARO-MURI W911NF-17-1-030), the National Science Foundation
(Grant no. 1935327) and a UC Regents in ME fellowship (2017-18).

\appendix
% TODO: Check how it changes if you select points using a higher-order quadrature scheme.
%The one place my heart breaks most. All this machinery is now meaningless.
\section{Proofs}
\subsection{On the construction of \(\optA_{\rm IC}\)}\label{ss:Constructing__Astar_IC}

The key idea used to prove \cref{thm:Astar_IC_is_a_minimizer} is that an explicit parametrization of the feasible set \(\mathcal{A}\) renders \cref{eq:IC_DMD} equivalent to an ordinary least-squares problem.
To this end, we pick two matrices \(\gcon^\perp\) and \(\fcon^\perp\)  whose columns comprise an orthonormal basis for \(\mathcal{N}\left( \gcon^H \right)\) and \(\mathcal{N}\left(\fcon^H\right)\) respectively.
In other words, \((\gcon^\perp,~\fcon^\perp)\) is any pair of full column rank matrices that satisfies the following identities:
\begin{subequations}
	\begin{align}
		\mymat{I}~&=~\gcon \gcon^\dagger ~+~ \gcon^\perp \left( \gcon^\perp \right)^H  \label{eq:I_equals_range_plus_null_for_D}\\
		\mymat{I}~&=~\fcon \fcon^\dagger ~+~ \fcon^\perp \left( \fcon^\perp \right)^H. \label{eq:I_equals_range_plus_null_for_E}
	\end{align}
\end{subequations}
The afore-mentioned parametrization can now be formalized.
\begin{lemma}\label{lem:Parametrization_mathcal_A}
	The feasible set of \algonick, defined in \cref{eq:defn__IC_DMD_Constraints} and denoted as \(\mathcal{A}\), is the affine subspace generated by adding an offset of \(\optC_0\) to  the range of the linear map \(\tilde{\mymat{A}} \rightarrow \fcon^\perp \tilde{\mymat{A}} \left( \gcon^\perp \right)^H\).
	\begin{equation}\label{eq:Parametrization_of_mathcal_A}
		\mathcal{A}~=~	\{\mymat{A}~|~\exists ~\tilde{\mymat{A}}~\textrm{such that}~ \mymat{A}~=~\optC_0 + \fcon^\perp \tilde{\mymat{A}} (\gcon^\perp)^H  \}.
	\end{equation}
\end{lemma}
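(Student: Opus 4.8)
The plan is to treat $\mathcal{A}$ as the solution set of the two simultaneous linear matrix equations $\mymat{A}\gcon = \gconplus$ and $\fcon^H\mymat{A} = (\fconplus)^H$, and to invoke the elementary fact that the solution set of any \emph{consistent} affine system is a single particular solution plus the linear space of solutions to the associated homogeneous system. Accordingly, I would first describe the homogeneous solution space, then exhibit $\optC_0$ as a particular solution (which simultaneously certifies consistency), and finally assemble the two into the claimed parametrization \eqref{eq:Parametrization_of_mathcal_A}.

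For the homogeneous part I would characterize all $\mymat{B}$ with $\mymat{B}\gcon = \mymat{0}$ and $\fcon^H\mymat{B} = \mymat{0}$. Right-multiplying $\mymat{B}$ by \eqref{eq:I_equals_range_plus_null_for_D} gives $\mymat{B} = \mymat{B}\gcon\gcon^\dagger + \mymat{B}\gcon^\perp(\gcon^\perp)^H$, so $\mymat{B}\gcon = \mymat{0}$ is equivalent to $\mymat{B} = \mymat{B}\gcon^\perp(\gcon^\perp)^H$; dually, left-multiplying by \eqref{eq:I_equals_range_plus_null_for_E} and using that $\fcon\fcon^\dagger$ is the Hermitian projector onto $\mathcal{R}(\fcon)$ shows $\fcon^H\mymat{B} = \mymat{0}$ is equivalent to $\mymat{B} = \fcon^\perp(\fcon^\perp)^H\mymat{B}$. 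Imposing both and setting $\tilde{\mymat{A}} := (\fcon^\perp)^H\mymat{B}\gcon^\perp$ yields $\mymat{B} = \fcon^\perp\tilde{\mymat{A}}(\gcon^\perp)^H$; conversely, every matrix of this form is killed by $\gcon$ on the right and by $\fcon^H$ on the left, since $(\gcon^\perp)^H\gcon = \mymat{0}$ and $\fcon^H\fcon^\perp = \mymat{0}$ by orthogonality of the chosen bases. Hence the homogeneous solution space is exactly $\{\fcon^\perp\tilde{\mymat{A}}(\gcon^\perp)^H\}$, and this step needs no standing assumption beyond the defining properties of the projectors.

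The crux, and the step I expect to consume the two hypotheses \eqref{eq:Assumptions_4_IC_DMD}, is verifying $\optC_0 \in \mathcal{A}$. The right equation is direct: full column rank \eqref{eq:Assumption_No_Redunancies} gives $\gcon^\dagger\gcon = \mymat{I}$ while $(\gcon^\perp)^H\gcon = \mymat{0}$, so from \eqref{eq:defn_C0_star} one reads off $\optC_0\gcon = \gconplus$. The left equation is where the design of $\optC_0$ earns its keep. Using $\fcon^H(\fcon^\dagger)^H = (\fcon^\dagger\fcon)^H = \mymat{I}$ (again from full column rank) collapses the second summand of $\fcon^H\optC_0$ to $(\fconplus)^H\gcon^\perp(\gcon^\perp)^H$, while the compatibility hypothesis \eqref{eq:Assumption_Compatibility} rewrites the first summand as $(\fconplus)^H\gcon\gcon^\dagger$; substituting $\gcon\gcon^\dagger = \mymat{I} - \gcon^\perp(\gcon^\perp)^H$ from \eqref{eq:I_equals_range_plus_null_for_D} turns it into $(\fconplus)^H - (\fconplus)^H\gcon^\perp(\gcon^\perp)^H$, whose trailing term cancels precisely against the second summand, leaving $\fcon^H\optC_0 = (\fconplus)^H$. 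This cancellation is exactly the anticipated difficulty, and it is the sole reason the correction term $(\fcon^\dagger)^H(\fconplus)^H\gcon^\perp(\gcon^\perp)^H$ appears in \eqref{eq:defn_C0_star}. Writing an arbitrary $\mymat{A} \in \mathcal{A}$ as $\optC_0 + \mymat{B}$ then forces $\mymat{B}$ to satisfy the homogeneous equations, and combining with the previous paragraph delivers \eqref{eq:Parametrization_of_mathcal_A}.
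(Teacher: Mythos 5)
Your proof is correct and rests on the same ingredients as the paper's: the two resolution-of-identity relations \cref{eq:I_equals_range_plus_null_for_D,eq:I_equals_range_plus_null_for_E}, full column rank to get $\gcon^\dagger\gcon = \fcon^\dagger\fcon = \mymat{I}$, and the compatibility condition \cref{eq:Assumption_Compatibility} to produce exactly the cancellation that makes $\fcon^H\optC_0 = (\fconplus)^H$. The only difference is organizational---you decompose $\mathcal{A}$ as ``particular solution $\optC_0$ plus homogeneous kernel'' while the paper proves the two set containments directly, carrying the inhomogeneous terms through the same manipulations---so the substance is essentially identical.
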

\begin{proof}
	We will show that the two sets in question are subsets of each other.
	
	Suppose \(\mymat{A} \in \mathcal{A}\).
	We can show that there exists a \(\tilde{\mymat{A}}\) with the desired properties by expanding \(\mymat{A}\) in a basis of the matrix vector-space that it belongs to.
	We begin by pre-multiplying \cref{eq:I_equals_range_plus_null_for_D} with \(\mymat{A}\) and then using the relation \(\mymat{A} \gcon~=~\gconplus\), we get:
	\begin{equation}\label{eq:A_after_AQ_R}
		\mymat{A}
		~=~ \mymat{A}\gcon \gcon^\dagger ~+~ \mymat{A}\gcon^\perp \left( \gcon^\perp \right)^H
		~=~ \gconplus \gcon^\dagger ~+~ \mymat{A}\gcon^\perp \left( \gcon^\perp \right)^H
		.
	\end{equation}
	In order to remove \(\mymat{A}\) from the second term, we use the fact that orthogonal projectors, like \(\fcon \fcon^\dagger\), are Hermitian to rewrite \cref{eq:I_equals_range_plus_null_for_E}:
	\begin{displaymath}
		\mymat{I}
		~=~ \left(\fcon^\dagger\right)^H \fcon^H ~+~ \fcon^\perp \left( \fcon^\perp \right)^H.
	\end{displaymath}	
	Post-multiplying both sides with \(\mymat{A}\) and using the relation \( \fcon^H \mymat{A} = (\fconplus)^H\), we get:
	\begin{displaymath}
		\mymat{A}
		~=~ \left(\fcon^\dagger\right)^H \fcon^H \mymat{A} ~+~ \fcon^\perp \left( \fcon^\perp \right)^H \mymat{A} 
		~=~ \left(\fcon^\dagger\right)^H (\fconplus)^H  ~+~ \fcon^\perp \left( \fcon^\perp \right)^H \mymat{A}. 
	\end{displaymath}	
	Substituting the final expression for \(\mymat{A}\) in the term \(\mymat{A}\gcon^\perp \left( \gcon^\perp \right)^H\) of \cref{eq:A_after_AQ_R}, we bring \(\optC_0\) into the picture.
	\begin{displaymath}
		\begin{aligned}
			\mymat{A}
			~&=~ \gconplus \gcon^\dagger 
			~+~ 
			\left(
			\left(\fcon^\dagger\right)^H (\fconplus)^H  ~+~ \fcon^\perp \left( \fcon^\perp \right)^H \mymat{A} 
			\right)
			\gcon^\perp \left( \gcon^\perp \right)^H \\
			~&=~ \optC_0 + \fcon^\perp \left( \fcon^\perp \right)^H \mymat{A} \gcon^\perp \left( \gcon^\perp \right)^H
			.
		\end{aligned}
	\end{displaymath}
	If we define 
	\begin{displaymath}
		\tilde{\mymat{A}}~:=~  \left( \fcon^\perp \right)^H \mymat{A} \gcon^\perp,
	\end{displaymath}
	then we obtain
	\begin{displaymath}
		\mymat{A}~=~\optC_0 + \fcon^\perp \tilde{\mymat{A}} \left( \gcon^\perp \right)^H.
	\end{displaymath}
	Hence, we have shown the following set containment:
	\begin{displaymath}
		\mathcal{A}~\subset~	\{\mymat{A}~|~\exists ~\tilde{\mymat{A}}~\textrm{such that}~ \mymat{A}~=~\optC_0 + \fcon^\perp \tilde{\mymat{A}} (\gcon^\perp)^H  \}.
	\end{displaymath}
	
	To complete the proof, we still need to prove that the set containment above also holds in the opposite direction.
	Consider any matrix \(\mymat{A}\) for which there exists \(\tilde{\mymat{A}}\) satisfying 
	\begin{equation}\label{eq:A_explicit_check}
		\mymat{A}~=~\optC_0 + \fcon^\perp \tilde{\mymat{A}} (\gcon^\perp)^H.
	\end{equation}
	Post-multiplying the above by \(\gcon\) and using the definition of \(\gcon^\perp\), we find:
	\begin{displaymath}
		\mymat{A}\gcon
		~=~\optC_0 \gcon + \fcon^\perp \tilde{\mymat{A}} 
		\underbrace{(\gcon^\perp)^H \gcon}
		~=~ \optC_0 \gcon.
	\end{displaymath}
	This can be further simplified using \cref{eq:defn_C0_star} and the linear independence of the columns of \(\gcon\) \cref{eq:Assumption_No_Redunancies}:
	\begin{displaymath}
		\optC_0 \gcon
		~=~
		\gconplus \underbrace{\gcon^\dagger \gcon} + \left( \fcon^\dagger \right)^H (\fconplus)^H \gcon^\perp  
		\underbrace{\left( \gcon^\perp \right)^H \gcon}
		~=~ \gconplus
	\end{displaymath}
	Hence, \(\mymat{A} \) satisfies
	\begin{displaymath}
		\mymat{A} \gcon~=~\gconplus.
	\end{displaymath}
	Verifying the other equality defining \(\mathcal{A}\) proceeds similarly by pre-multiplying \cref{eq:A_explicit_check} with \(\fcon^H\).
	\begin{displaymath}
		\fcon^H	\mymat{A}
		~=~ 
		\fcon^H	\optC_0 + 
		\underbrace{\fcon^H	\fcon^\perp} \tilde{\mymat{A}} (\gcon^\perp)^H
		~=~ 
		\fcon^H	\optC_0.
	\end{displaymath}
	Once again, we invoke the definition of \(\optC_0\) alongside  \(\fcon\)'s full column rank to get:
	\begin{displaymath}
		\begin{aligned}
			\fcon^H	\optC_0
			~&=~ \fcon^H \gconplus \gcon^\dagger + 
			\underbrace{\fcon^H \left( \fcon^\dagger \right)^H} (\fconplus)^H \gcon^\perp \left( \gcon^\perp \right)^H \\
			~&=~ \fcon^H \gconplus \gcon^\dagger + 
			(\fconplus)^H \gcon^\perp \left( \gcon^\perp \right)^H.
		\end{aligned}
	\end{displaymath}
	The two terms above can be merged using the compatibility condition in \cref{eq:Assumption_Compatibility} and the identity \cref{eq:I_equals_range_plus_null_for_D}:
	\begin{displaymath}
		\underbrace{\fcon^H \gconplus} \gcon^\dagger + 
		(\fconplus)^H \gcon^\perp \left( \gcon^\perp \right)^H
		~=~  (\fconplus)^H \gcon \gcon^\dagger + 
		(\fconplus)^H \gcon^\perp \left( \gcon^\perp \right)^H
		~=~ (\fconplus)^H.
	\end{displaymath}
	Therefore, we have
	\begin{displaymath}
		\fcon^H	\mymat{A}
		~=~  (\fconplus)^H,
	\end{displaymath}
	and consequently,
	\begin{displaymath}
		\mathcal{A}~\supset~	\{\mymat{A}~|~\exists ~\tilde{\mymat{A}}~\textrm{such that}~ \mymat{A}~=~\optC_0 + \fcon^\perp \tilde{\mymat{A}} (\gcon^\perp)^H  \}.
	\end{displaymath}
\end{proof}

\begin{proof}[Proof of \cref{thm:Astar_IC_is_a_minimizer}]
	% Good that we did it without the continuity of pre-image theorem.
	Suppose we construct the matrix \(\mymat{Q}_{\fcon} \) so that its columns form an orthonormal basis for \(\mathcal{R}(\fcon)\).
	Consider the ordinary least squares problem:
	\begin{equation}\label{eq:Reduced_OLS}
		\min_{\tilde{\mymat{A}}}~
		\left
		\lVert \tilde{\mymat{A}}  \left( \left(\gcon^\perp\right)^H \mymat{X} \right) - 
		\left(\fcon^\perp\right)^H 
		\left(\mymat{Y} - \optC_0 \mymat{X}\right)
		\right
		\rVert_F^2
		~+~
		\lVert
		\mymat{Q}_{\fcon}^H
		(\mymat{Y} - \optC_0 \mymat{X})
		\rVert_F^2.
	\end{equation}
	Observe that \( 	\optA_{\rm lsq} \) is defined in \cref{eq:defn_Astar_lsq} to be the (minimum-norm) optimizer of \cref{eq:Reduced_OLS}.
	We will see that \cref{eq:Reduced_OLS} can be connected to \cref{eq:IC_DMD} in a manner that lets \(\optA_{\rm IC}\) imbibe the optimality of \( 	\optA_{\rm lsq} \).
	
	We begin by factoring out \( \left( \fcon^\perp \right)^H \) from the first term and \(  \mymat{Q}_{\fcon}^H \)  from the second term in the objective of \cref{eq:Reduced_OLS}.
	Since \( \tilde{\mymat{A}} = \left( \fcon^\perp \right)^H \fcon^\perp \tilde{\mymat{A}}\), the first term reads thus:
	\begin{displaymath}
		\begin{aligned}
			&\left
			\lVert \tilde{\mymat{A}}  \left( \left(\gcon^\perp\right)^H \mymat{X} \right) - 
			\left(\fcon^\perp\right)^H 
			\left(\mymat{Y} - \optC_0 \mymat{X}\right)
			\right
			\rVert_F^2 \\
			~=~
			&\left
			\lVert 
			\left( \fcon^\perp \right)^H
			\left(
			\fcon^\perp
			\tilde{\mymat{A}}   \left(\gcon^\perp\right)^H \mymat{X}
			-
			\left(\mymat{Y} - \optC_0 \mymat{X}\right)
			\right)
			\right
			\rVert_F^2
		\end{aligned}
	\end{displaymath}
	Observe that replacing the pre-factor of \(\left( \fcon^\perp \right)^H\) above with \(\mymat{Q}_{\fcon}^H\) gives the second term of \cref{eq:Reduced_OLS}. 
	In other words,
	\begin{displaymath}
		\lVert
		\mymat{Q}_{\fcon}^H
		(\mymat{Y} - \optC_0 \mymat{X})
		\rVert_F^2
		~=~
		\left
		\lVert 
		\mymat{Q}_{\fcon}^H
		\left(
		\fcon^\perp
		\tilde{\mymat{A}}   \left(\gcon^\perp\right)^H \mymat{X}
		-
		\left(\mymat{Y} - \optC_0 \mymat{X}\right)
		\right)
		\right
		\rVert_F^2.
	\end{displaymath}
	Consequently, the objective function in \cref{eq:Reduced_OLS} may be re-written as follows:
	\begin{displaymath}
		\left\lVert
		\begin{bmatrix}
			\mymat{Q}_{\fcon}^H \\
			\left(\fcon^\perp\right)^H
		\end{bmatrix}
		\left(
		\fcon^\perp
		\tilde{\mymat{A}}   \left(\gcon^\perp\right)^H \mymat{X}
		-
		\left(\mymat{Y} - \optC_0 \mymat{X}\right)
		\right)
		\right\rVert_F^2
	\end{displaymath}
	By the definitions of \(\mymat{Q}_{\fcon}\) and \(\fcon^\perp\),
	\(
	\begin{bmatrix}
		\mymat{Q}_{\fcon} & \fcon^\perp
	\end{bmatrix}
	\) is a unitary matrix.
	Since the Frobenius norm is invariant under unitary operations, the above expression simplifies to
	\begin{displaymath}
		\left\lVert
		\fcon^\perp
		\tilde{\mymat{A}}   \left(\gcon^\perp\right)^H \mymat{X}
		-
		\left(\mymat{Y} - \optC_0 \mymat{X}\right)
		\right\rVert_F^2
		~=~
		\left\lVert
		\left(
		\optC_0
		+
		\fcon^\perp
		\tilde{\mymat{A}}   \left(\gcon^\perp\right)^H
		\right)
		\mymat{X}
		-
		\mymat{Y}
		\right\rVert_F^2.
	\end{displaymath}
	Since this is just a rephrasal of the cost function in \cref{eq:Reduced_OLS}, the optimality of \(	\optA_{\rm lsq}\) gives us the following inequality:
	\begin{displaymath}
		\forall~\tilde{\mymat{A}},~
		\left\lVert
		\left(
		\optC_0
		+
		\fcon^\perp
		\tilde{\mymat{A}}   \left(\gcon^\perp\right)^H
		\right)
		\mymat{X}
		-
		\mymat{Y}
		\right\rVert_F^2
		\geq
		\left\lVert
		\left(
		\optC_0
		+
		\fcon^\perp
		\optA_{\rm lsq}   \left(\gcon^\perp\right)^H
		\right)
		\mymat{X}
		-
		\mymat{Y}
		\right\rVert_F^2.
	\end{displaymath}
	Using  \cref{lem:Parametrization_mathcal_A}, we get
	\begin{displaymath}
		\forall~\mymat{A}\in \mathcal{A},~
		\lVert \mymat{A}~ \mymat{X} - \mymat{Y} \rVert_F^2~\geq~
		\left\lVert 
				\left(
		\optC_0
		+
		\fcon^\perp
		\optA_{\rm lsq}   \left(\gcon^\perp\right)^H
		\right)
		 ~\mymat{X} - \mymat{Y} \right\rVert_F^2.
	\end{displaymath}
	Since \cref{lem:Parametrization_mathcal_A} also says that
	\begin{displaymath}
				\left(
		\optC_0
		+
		\fcon^\perp
		\optA_{\rm lsq}   \left(\gcon^\perp\right)^H
		\right) \in \mathcal{A},
	\end{displaymath}
	 the above matrix is rendered a minimizer of \cref{eq:IC_DMD}.
\end{proof}

\begin{remark}
	The arguments made here are valid for a broader class of priors that only satisfies the weakened version of \cref{eq:Assumption_No_Redunancies} given below:
	\begin{equation}\label{eq:Assumption_Generalized_No_Redunancies}
		\gconplus \gcon^\dagger \gcon~=~\gconplus,\quad \fconplus \fcon^\dagger \fcon~=~\fconplus.
	\end{equation}
	The utility of this relaxation is that it accommodates the partial or complete absence of prior information. For example, we may know some invariant solutions of \cref{eq:DS4SDMD} but nothing about the associated Koopman eigen-functions. In this scenario, the matrices \(\gcon\) and \(\gconplus\) will be determined by the known invariant solutions while \(\fcon\) and \(\fconplus\) can be safely set to \(\mymat{0}\).
\end{remark}

\subsection{Establishing the recovery of Affine DMD}\label{ss:Proof__Affine_DMD__recovery}
%TODO: Clean up AFTER the plots are sorted out.
\begin{proof}[Proof of \cref{prop:Affine_DMD__recovery} ]
	We organize the proofs similar to the statements.
	\begin{enumerate}
		\item According to \cref{eq:Constraints_for_Affine_DMD}, we have:
		\begin{displaymath}
			\begin{aligned}
				\mathcal{A} ~&=~ \{ \mymat{A} ~|~\fcon^H \mymat{A} ~=~\left(\fconplus\right)^H\} \\
				~&=~
				\left\{ 
				\mymat{A} ~|~\exists~ \mymat{A}_{21},~\mymat{A}_{22}~\textrm{such that}~
				\mymat{A}~=~
				\begin{bmatrix}
					1 & \\
					\mymat{A}_{21} & \mymat{A}_{22}
				\end{bmatrix}
				\right\}.
			\end{aligned}
		\end{displaymath}
		This gives the desired restriction on the first row of \(\optA_{\rm IC}\).
		\item Follows from the above, by using \cref{eq:IC_DMD} and \cref{eq:defn_Affine_DMD} simultaneously.
		\item The proof of Proposition 3.1 in \cite{hirsh2019centering} establishes \cref{eq:IC_DMD__offsets_analogoues_to__Affine_DMD} for real-valued \(\mymat{X}\) and \(\mymat{Y}\).
		Here, we generalize it\footnote{Primarily needed when \(\mathcal{R}\left(\rowselect{\mymat{Y}}_2^T\right) \nsubseteq \mathcal{R}\left(\mymat{X}^T\right)\), where the arguments in \cite{hirsh2019centering} would necessitate computing the gradient of the real-valued objective function with respect to a potentially \emph{complex-valued} \(\mymat{A}_{21}\). While this could be done in principle by completing the squares or representing complex numbers as \(2 \times 2\) matrices, we present a simpler alternative here.} to work for complex-valued matrices.	
		
		When \(\mathcal{R}\left(\rowselect{\mymat{Y}}_2^T\right) \subseteq \mathcal{R}\left(\mymat{X}^T\right)\), the optimal value of the \algonick~objective is 0. Hence, every solution must satisfy:
		\begin{displaymath}
			\left(\optA_{\rm IC}\right)_{22} \rowselect{\mymat{X}}_2 + \left(\optA_{\rm IC}\right)_{21} \myvec{1}_n^H = \rowselect{\mymat{Y}}_2.
		\end{displaymath}
		Post-multiplying both sides with \(\frac{\myvec{1}_\trainlength}{\trainlength}\) and then using \cref{eq:defn_Affine_DMD__Sample_means} yields \cref{eq:IC_DMD__offsets_analogoues_to__Affine_DMD}.
		
		On the other-hand, when \(\mathcal{R}\left(\rowselect{\mymat{Y}}_2^T\right) \nsubseteq \mathcal{R}\left(\mymat{X}^T\right)\), the convenient constraint from the preceding case is missing.
		As such, we are forced to work with the \algonick~objective \cref{eq:IC_DMD} which simplifies to the following relation due to the specific choices of \(\fcon~\textrm{and}~\fconplus\):
		\begin{displaymath}
			\left(\optA_{\rm IC}\right)_{22}, \left(\optA_{\rm IC}\right)_{21} ~=~ \arg\min_{\mymat{A}_{22}, \mymat{A}_{21}}~ \left\lVert \mymat{A}_{22} \rowselect{\mymat{X}}_2 + \mymat{A}_{21} \myvec{1}_n^H - \rowselect{\mymat{Y}}_2
			\right \rVert_{F}^2
		\end{displaymath}
		In order to decouple the two matrices, we can invoke the invariance of the Frobenius norm under unitary transformations, to re-write the objective function above as,
		\begin{displaymath}
			\left\lVert  \left(\mymat{A}_{22} \rowselect{\mymat{X}}_2 + \mymat{A}_{21} \myvec{1}_n^H - \rowselect{\mymat{Y}}_2 \right) \mymat{Q}
			\right \rVert_{F}^2,
		\end{displaymath}
		where \(\mymat{Q}\) is the following unitary matrix:
		\begin{displaymath}
			\mymat{Q} ~:=~ 
			\begin{bmatrix}
				\frac{\myvec{1}_\trainlength}{\sqrt{\trainlength}} & \mymat{Q}_2
			\end{bmatrix}.
		\end{displaymath}
		The first column of the matrix product can be written in-terms of \(\myvec{\mu}_{\mymat{X}}~\textrm{and}~\myvec{\mu}_{\mymat{Y}}\) using \cref{eq:defn_Affine_DMD__Sample_means}:
		\begin{displaymath}
			\left(\mymat{A}_{22} \rowselect{\mymat{X}}_2 + \mymat{A}_{21} \myvec{1}_n^H - \rowselect{\mymat{Y}}_2 \right)
			\frac{\myvec{1}_\trainlength}{\sqrt{\trainlength}}
			~=~
			\sqrt{\trainlength}
			\left(
			\mymat{A}_{22} \myvec{\mu}_{\mymat{X}} + \mymat{A}_{21} - \myvec{\mu}_{\mymat{Y}}
			\right)
		\end{displaymath}
		The remaining columns of the matrix product do not possess \(\mymat{A}_{21} \) due to the unitary nature of \(\mymat{Q}\):
		\begin{displaymath}
			\left(\mymat{A}_{22} \rowselect{\mymat{X}}_2 + \mymat{A}_{21} \underbrace{\myvec{1}_n^H} - \rowselect{\mymat{Y}}_2 \right)
			\underbrace{
				\mymat{Q}_2
			}
			~=~
			\left(
			\mymat{A}_{22} \rowselect{\mymat{X}}_2  - \rowselect{\mymat{Y}}_2
			\right)
			\mymat{Q}_2
		\end{displaymath}
		Putting these together, the de-coupled \algonick~ objective reads thus:
		\begin{displaymath}
			\begin{aligned}
				&\left(\optA_{\rm IC}\right)_{22}, \left(\optA_{\rm IC}\right)_{21} ~=~ \\ &\arg\min_{\mymat{A}_{22}, \mymat{A}_{21}}~ 
				\trainlength
				\left\lVert \mymat{A}_{22} \myvec{\mu}_{\mymat{X}} + \mymat{A}_{21} - \myvec{\mu}_{\mymat{Y}} 
				\right \rVert_2^2
				+
				\left \lVert
				\left(
				\mymat{A}_{22} \rowselect{\mymat{X}}_2  - \rowselect{\mymat{Y}}_2
				\right)
				\mymat{Q}_2
				\right \rVert_F^2
			\end{aligned}
		\end{displaymath}
		Since the first term can be eliminated using \(\mymat{A}_{21}\), independent of the choice of \(\mymat{A}_{22}\), we get,
		\begin{displaymath}
			\left( \optA_{\rm IC}  \right)_{22} \myvec{\mu}_{\mymat{X}} + \left(\optA_{\rm IC} \right)_{21} - \myvec{\mu}_{\mymat{Y}}  = \myvec{0},
		\end{displaymath}
		which is the desired conclusion. 
	\end{enumerate}
\end{proof}

\bibliographystyle{siamplain}
\bibliography{ic_dmd}

\end{document}